\newcommand{\Z}{{\mathbf Z}}
\newcommand{\R}{\mathbf{R}}
\renewcommand{\P}{\mathrm{P}}
\newcommand {\E}{\mathrm{E}}
\renewcommand{\d}{\text{\rm d}}
\renewcommand{\Re}{\text{\rm Re}\,}
\newcommand{\sL}{\mathcal{L}}
\newcommand{\sP}{\mathcal{P}}
\newcommand{\sG}{\mathcal{G}}
\newcommand{\sH}{\mathcal{H}}
\newcommand{\sA}{\mathcal{A}}
\newcommand{\lip}{\mathrm{Lip}}
\newtheorem{stat}{Statement}[section]
\newtheorem{proposition}[stat]{Proposition}
\newtheorem{corollary}[stat]{Corollary}
\newtheorem{theorem}[stat]{Theorem}
\newtheorem{lemma}[stat]{Lemma}
\theoremstyle{definition} 
\newtheorem{remark}[stat]{Remark}
\newtheorem{example}[stat]{Example}
\numberwithin{equation}{section}
\begin{document}

\title{\bf Intermittence and nonlinear  parabolic stochastic partial
	differential equations%
	\thanks{%
	Research supported in part by NSF grant DMS-0704024.}}
	
\author{Mohammud Foondun \and Davar Khoshnevisan\\
	\and University of Utah}

\date{April 16, 2008}
\maketitle
\begin{abstract}
	We consider nonlinear parabolic
	SPDEs of the form 
	$\partial_t u=\sL u + \sigma(u)\dot w$, where
	$\dot w$ denotes space-time white noise,
	$\sigma:\R\to\R$ is [globally] Lipschitz continuous,
	and $\sL$ is the $L^2$-generator of a L\'evy process.
	We present precise criteria for existence
	as well as uniqueness of solutions.
	More significantly, we prove that these solutions grow
	in time with at most a precise exponential rate.	
	We establish also that when $\sigma$ is globally Lipschitz
	and asymptotically sublinear, the solution
	to the nonlinear heat equation is ``weakly intermittent,''
	provided that the symmetrization
	of $\sL$ is recurrent and the initial data is sufficiently
	large.
	
	Among other things, 	our results lead to general
	formulas for the upper second-moment
	Liapounov exponent of the parabolic
	Anderson model for $\sL$ in dimension $(1+1)$. When
	$\sL=\kappa\partial_{xx}$ for $\kappa>0$, these formulas 
	agree with the earlier results of statistical physics
	\cite{Kardar,KrugSpohn,LL63}, and also probability theory
	\cite{BC,CM94} in the two exactly-solvable cases where $u_0=\delta_0$
	and $u_0\equiv 1$.\\
	
	\vskip .2cm \noindent{\it Keywords:}
		Stochastic partial differential equations, L\'evy processes,
		Liapounov exponents, weak intermittence, the
		Burkholder--Davis--Gundy inequality.\\
		
	\noindent{\it \noindent AMS 2000 subject classification:}
		Primary: 60H15; Secondary: 82B44.\\
		
	\noindent{\it Running Title:} Intermittency and 
		parabolic SPDEs.\newpage
\end{abstract}

\section{Introduction}

Let $\{\dot{w}(t\,,x)\}_{t\ge 0,x\in\R}$ denote space-time
white noise, and $\sigma:\R\to\R$ be a fixed Lipschitz function.
Presently we study parabolic stochastic partial differential
equations [SPDEs] of the following type:
\begin{equation}\label{heat}\left|\begin{split}
	&\partial_t u(t\,,x) = (\sL u)(t\,,x) + 
		\sigma(u(t\,,x))\dot{w}(t\,,x),\\
	&u(0\,,x)= u_0(x),
\end{split}\right.\end{equation}
where $t\ge 0$, $x\in\R$, $u_0$ is a measurable and nonnegative
initial function, and $\sL$
is the $L^2(\R)$-generator of a L\'evy process
$X:=\{X_t\}_{t\ge 0}$; and $\sL$ acts
only on the variable $x$.
We normalize $X$ so that
$\E\exp(i\xi X_t)=\exp(-t\Psi(\xi))$ for all $t\ge 0$ and
$\xi\in\R$; $\sL$ is
described via its Fourier multiplier as
$\hat\sL (\xi) =-\Psi(\xi)$ for all $\xi\in\R$.
See the books by Bertoin \cite{Bertoin} and Jacob \cite{Jacob}
for pedagogic accounts.

Our principal aim is to study the mild solutions of \eqref{heat},
when they exist. At this point in time, we understand
\eqref{heat} only when its linearization 
with vanishing inital data has
a strong solution.
Together with E. Nualart \cite{FKN},
we have investigated precisely those linearized equations.
That is,
\begin{equation}\label{heat:lin}\left|\begin{split}
	&\partial_t u(t\,,x) = (\sL u)(t\,,x) + 
		\dot{w}(t\,,x),\\
	&u(0\,,x)= 0.
\end{split}\right.\end{equation}
And we proved among other things  that \eqref{heat:lin}
has a strong solution if and only if Paul
L\'evy's symmetrization $\bar{X}$ of the process $X$
has local times, where
\begin{equation}\label{def:Xbar}
	\bar{X}_t := X_t-X_t'
	\qquad\text{for all $t\ge 0$},
\end{equation}
and $X':=\{X'_t\}_{t\ge 0}$ is an independent copy of $X$.
In fact, much of the local-time
theory of symmetric 1-dimensional L\'evy processes can
be embedded within the analysis of SPDEs defined by
\eqref{heat:lin}; see \cite{FKN} for details.
We also proved in \cite{FKN} that, as far as matters of existence
and regularity are concerned, one does not encounter new phenomena
if one adds to \eqref{heat:lin} Lipschitz-continuous
additive nonlinearities. This is why we consider 
only multiplicative nonlinearities in \eqref{heat}.

Let $\lip_\sigma$ denote the Lipschitz constant of
$\sigma$, and recall that $u_0$ is the initial
data in \eqref{heat}. Here and throughout we assume, without
further mention, that: 
\begin{enumerate}
	\item[(i)] $0<\lip_\sigma<\infty$, so that 
		$\sigma$ is [globally] Lipschitz and nontrivial; and
	\item[(ii)] $u_0$ is bounded, nonnegative, and measurable.
\end{enumerate}
Under these conditions, we prove that
the SPDE \eqref{heat} has a mild solution 
$u:=\{u(t\,,x)\}_{t\ge 0,x\in\R}$ that
is unique up to a modification. More significantly,
we show that
the growth of $t\mapsto u(t\,,x)$ is tied closely
with the existence of $u$.
With this aim in mind we
define the \emph{upper $p$th-moment Liapounov exponent}
$\bar\gamma(p)$ of $u$ as
\begin{equation}\label{def:gamma}
	\bar\gamma(p) := \limsup_{t\to\infty} \frac{1}{t}\ln
	\E\left(\left|u(t\,,x)\right|^p\right)
	\quad\text{for all $p\in(0\,,\infty)$}.
\end{equation}

We say that $u$ is \emph{weakly intermittent}\footnote{%
Our notion \eqref{def:WI} of weak intermittence
differs from that of \cite[Definition 1.2]{GartnerdenHollander}.} if
\begin{equation}\label{def:WI}
	\bar\gamma(2)>0 \quad\text{and}\quad
	\bar\gamma(p)<\infty\quad\text{for all $p>2$}.
\end{equation}
We are interested primarily in establishing
weak intermittence. However, let us mention also that 
weak intermittence can sometimes imply the much
better-known notion of
\emph{full intermittency} \cite[Definition III.1.1, p.\ 55]{CM94};
the latter is the property that
\begin{equation}\label{def:FI}
	p\mapsto\frac{\bar\gamma(p)}{p}\quad
	\text{is strictly increasing for all $p\ge 2$}.
\end{equation}
Here is a brief justification:
Evidently, $\bar\gamma$ is convex and zero at zero, and hence
$p\mapsto\bar\gamma(p)/p$ is nondecreasing. Convexity
implies readily that if in addition $\bar\gamma(1)=0$, then 
\eqref{def:WI} implies \eqref{def:FI}.\footnote{Inspect the proof
of Theorem III.1.2 in Carmona and Molchanov \cite[p.\ 55]{CM94}
for example.} On the other hand, 
a sufficient condition for $\bar\gamma(1)=0$
is that $u(t\,,x)\ge 0$ a.s.\ for all $t>0$
and $x\in\R$; for then, \eqref{mild} below shows
immediately that $\E(|u(t\,,x)|)=\E[u(t\,,x)]$
is bounded uniformly in $t$.
We have proved the following: 
``\emph{Whenever one has
a comparison principle---such as that of Mueller \cite{Mueller}
in the case that $\sL=\kappa\partial_{xx}$
and $\sigma(x)=\lambda x$---weak
intermittence necessarily implies full intermittency}.''

Here, we do not pursue comparison principles. Rather,
the principal goal of this note is to demonstrate that under 
various nearly-optimal conditions on $\sigma$ and $u_0$, the solution
$u$ to \eqref{heat} is weakly intermittent.

There is a big
literature on intermittency that investigates the special case
of \eqref{heat} with $\sL=\kappa\partial_{xx}$
and $\sigma(z)=\lambda z$ for constants $\kappa>0$
and $\lambda\in\R$; that is the \emph{parabolic Anderson
model}. See, for example,
\cite{BC,CM94,Kardar,KrugSpohn,LL63,Molch91}, together with their 
sizable combined references. The existing 
rigorous intermittency results all begin with
a probabilistic formulation of \eqref{heat} in terms of
the Feynman--Kac formula.
Presently, we introduce an analytic method
that shows clearly that weak
intermittence is connected intimately with the facts that:
(i) \eqref{heat} has a strong solution; and (ii) $\sigma$
has linear growth, in one form or another. Our method is
motivated very strongly by the theory of optimal regularity
for analytic semigroups \cite{Lunardi}.

We would like to mention also that there is an impressive body of
recent mathematical works on other Anderson
models and $L^p(\P)$ intermittency, as well as 
almost-sure intermittency
\cite[and their combined references]{CV98,CKM,CM,CMS,CMS1,DM,FV,%
GartnerdenHollander,GartnerKonig,GK,GKM,HKM,KLMS,Shiga}.

A brief outline follows: In \S\ref{sec:main} we
state the main results of the paper; these results are proved
subsequently in \S\ref{sec:proofs}, after we establish
some a priori bounds in \S\ref{sec:apriori}. Finally, 
we show in
Appendix \ref{sec:reg}  that if the initial data is
continuous, then the solution to \eqref{heat}
is continuous in probability, in fact 
continuous in $L^p(\P)$ for
all $p>0$. Consequently, if $u_0$ is
continuous, then $u$ has a separable modification.
As an immediate byproduct of our proof we find that when $\sL$ is
the fractional Laplacian of index $\alpha\in(1\,,2]$
and $u_0$ is continuous, $u$ has a jointly
H\"older-continuous modification (Example \ref{ex:Stable:Holder}).

\section{Main results}\label{sec:main}
We combine the existence result of
\cite{FKN} with a theorem of
Hawkes \cite{Hawkes} to deduce that \eqref{heat:lin}
has a strong solution if and only if
$\Upsilon(\beta)<\infty$ for some $\beta>0$, where
\begin{equation}\label{eq:Upsilon}
	\Upsilon(\beta)
	:= \frac{1}{2\pi}
	\int_{-\infty}^\infty \frac{\d\xi}{\beta+2\Re\Psi(\xi)}
	\qquad\text{for all $\beta>0$}.
\end{equation}
Furthermore, $\Upsilon(\beta)$ is finite for some $\beta>0$
if and only if it is finite for all $\beta>0$. And under
this integrability condition, \eqref{heat:lin} has a unique
solution as well. For related results, see 
Brze{\'z}niak and van Neerven \cite{BvN}.

Motivated by the preceding remarks, we consider only
the case that the linearized equation \eqref{heat:lin} 
has a strong solution. That is, we suppose
here and throughout that
$\Upsilon(\beta)<\infty$ for all $\beta>0$.
We might note that
$\Upsilon$ is decreasing, $\Upsilon(\beta)>0$
for all $\beta>0$, and
$\lim_{\beta\uparrow\infty}\Upsilon(\beta)=0$. 

Our next result establishes natural conditions for:
(i) the existence and uniqueness of a solution to
\eqref{heat}; and (ii) $u$ to grow at most exponentially
with a sharp exponent. It is possible to adapt the Hilbert-space
methods of  Peszat and Zabczyk \cite{PZ} to derive
existence and uniqueness. 
See also Da Prato \cite{Da} and Da Prato and Zabczyk \cite{DZ}.
Instead of following that route, we devise a 
method that shows very clearly that exponential growth
is a consequence of the existence of a solution. Moreover,
our method yields constants that will soon be shown to be
essentially unimproveable. 

Henceforth, by a ``solution'' to \eqref{heat} we mean a mild solution
$u$ that satisfies the following:
\begin{equation}
	\sup_{x\in\R}\sup_{t\in[0,T]}\E\left(\left| u(t\,,x)
	\right|^2\right)<\infty\quad
	\text{for all $T>0$}.
\end{equation}
It turns out that solutions to \eqref{heat} have better
\emph{a priori} integrability features. The following quantifies 
this remark.

\begin{theorem}\label{th:exist}
	Equation \eqref{heat} has a solution $u$
	that is unique up to a modification. Moreover,
	for all even integers $p\ge 2$,
	\begin{equation}\label{eq:UB}
		\bar\gamma(p) \le 
		\inf\left\{\beta>0:\, \Upsilon\left(\frac{2\beta}{p}\right)<
		\frac{1}{( z_p \lip_\sigma)^2}\right\}<\infty,
	\end{equation}
	where $ z_p$ denotes the largest 
	positive zero of the Hermite polynomial
	$\text{\sl He}_p$.
\end{theorem}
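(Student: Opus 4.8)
The plan is to obtain everything from a single fixed-point argument in a weighted norm, with the weight tuned to the Liapounov exponent. First I would write the mild formulation
\begin{equation}\label{mild}
	u(t\,,x) = (G_t u_0)(x) + \int_{(0,t)\times\R}
	G_{t-s}(x\,,y)\,\sigma(u(s\,,y))\,w(\d s\,\d y),
\end{equation}
where $G_t(x\,,y)$ is the transition density of $X$ (which exists as a bona fide kernel precisely because $\Upsilon(\beta)<\infty$). Fix an even integer $p\ge 2$ and, for $\beta>0$, define the norm $\|v\|_{p,\beta}^2 := \sup_{t\ge 0}\sup_{x\in\R} e^{-\beta t}\,\E(|v(t\,,x)|^p)^{2/p}$ on predictable random fields. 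The existence/uniqueness claim follows by showing that the operator $\mathcal{A}v(t\,,x) := (G_tu_0)(x) + \int G_{t-s}(x\,,y)\sigma(v(s\,,y))\,w(\d s\,\d y)$ is a contraction on the Banach space defined by $\|\cdot\|_{p,\beta}$ once $\beta$ is large enough, by a standard Picard iteration, and uniqueness up to a modification is then routine. The bound \eqref{eq:UB} will come out of quantifying exactly how large $\beta$ needs to be.

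The analytic core is a sharp moment estimate for the stochastic-integral term. By the Burkholder--Davis--Gundy inequality in its form with the optimal constant for the $p$th moment — here is where the largest zero $z_p$ of the Hermite polynomial $\mathrm{He}_p$ enters, since the optimal BDG constant for stochastic integrals against white noise is governed precisely by $z_p$ (Carlen--Kree) — one gets
\begin{equation}
	\E\left(\left| \int G_{t-s}(x\,,y)\sigma(v(s\,,y))\,w(\d s\,\d y)\right|^p\right)^{2/p}
	\le z_p^2 \int_0^t\!\!\int_\R G_{t-s}(x\,,y)^2\,
	\E\left(|\sigma(v(s\,,y))|^p\right)^{2/p}\d y\,\d s.
\end{equation}
Using $|\sigma(z)|\le |\sigma(0)| + \lip_\sigma|z|$, bounding $\E(|\sigma(v)|^p)^{2/p}$ by (a constant plus) $\lip_\sigma^2 e^{\beta s}\|v\|_{p,\beta}^2$, and then multiplying by $e^{-\beta t}$ and integrating, the space-time integral collapses via the semigroup/Chapman--Kolmogorov identity $\int_\R G_{t-s}(x\,,y)^2\,\d y = G_{2(t-s)}(\bar X;0)$ together with the Plancherel identity
\begin{equation}
	\int_0^\infty e^{-\beta r}\left(\int_\R G_r(x\,,y)^2\,\d y\right)\d r
	= \frac{1}{2\pi}\int_{-\infty}^\infty \frac{\d\xi}{\beta + 2\Re\Psi(\xi)}
	= \Upsilon(\beta),
\end{equation}
after the substitution $r = 2(t-s)$ which turns $\beta$ into $\beta/2$ scaled against the factor $p$ in the norm — this is the source of the argument $2\beta/p$ in \eqref{eq:UB}. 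The upshot is $\|\mathcal{A}v - \mathcal{A}\tilde v\|_{p,\beta}^2 \le (z_p\lip_\sigma)^2\,\Upsilon(2\beta/p)\,\|v-\tilde v\|_{p,\beta}^2$, so $\mathcal{A}$ is a contraction exactly when $\Upsilon(2\beta/p) < (z_p\lip_\sigma)^{-2}$; since $\Upsilon$ is continuous, strictly decreasing, and tends to $0$ at infinity, such $\beta$ exist, giving finiteness, and the infimum of all such $\beta$ bounds $\sup_t t^{-1}\ln\E(|u(t,x)|^p) = \bar\gamma(p)$ because $\|u\|_{p,\beta}<\infty$ forces $\E(|u(t,x)|^p)^{2/p} = O(e^{\beta t})$.

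The step I expect to be most delicate is not the fixed point itself but the bookkeeping around the BDG constant: one must verify that the relevant constant really is $z_p^2$ (and not merely $O(p)$), which requires the Carlen--Kree sharp form of the inequality applied to the martingale $t\mapsto\int_{(0,t)\times\R}G(\cdots)\,w(\d s\,\d y)$ and the observation that its quadratic variation is the deterministic-looking $\int\int G^2\,\E(\cdots)$ only after one more application of BDG/Minkowski to pull the $\E$ inside — i.e. the inequality is used twice and the constants must be tracked honestly. A secondary point requiring care is that the same contraction argument must be run first with a truncated (locally Lipschitz cutoff) $\sigma$ and bounded initial data to produce an honest solution with $\sup_{[0,T]\times\R}\E(|u|^2)<\infty$, and only then does one feed that solution back into the moment recursion for $p>2$ to extract the exponent; handling $\sigma(0)\neq 0$ introduces an inhomogeneous term that is absorbed because the $u_0$-contribution $\sup_x(G_tu_0)(x)\le \sup u_0$ is bounded uniformly in $t$.
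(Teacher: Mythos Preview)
Your approach is essentially the paper's: a Picard/fixed-point argument in the exponentially weighted norms $\|\cdot\|_{p,\beta}$, with the contraction constant computed via the Davis optimal BDG inequality and Plancherel, yielding the threshold $\Upsilon(2\beta/p)<(z_p\lip_\sigma)^{-2}$.

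A few points in your write-up are muddled and would need correction before this becomes a proof. First, the identity $\int_\R G_{t-s}(x,y)^2\,\d y = G_{2(t-s)}(\bar X;0)$ is only valid when $X$ is symmetric; in general one must use Plancherel directly, obtaining $\|p_s\|_{L^2(\R)}^2=(2\pi)^{-1}\int e^{-2s\Re\Psi(\xi)}\,\d\xi$, and then Tonelli gives $\int_0^\infty e^{-\beta s}\|p_s\|_{L^2(\R)}^2\,\d s=\Upsilon(\beta)$ with no substitution $r=2(t-s)$ involved. With your norm convention (weight $e^{-\beta t}$ on $\|u(t,x)\|_{L^p}^2$) you actually get $\Upsilon(\beta)$ in the contraction estimate, and the $2\beta/p$ only reappears when you translate $\|u\|_{p,\beta}<\infty$ into a bound on $\bar\gamma(p)$; so your ``source of the argument $2\beta/p$'' explanation is off. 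Second, the optimal constant $z_p$ is due to Davis, not Carlen--Kree (the latter supply the bound $z_p\le 2\sqrt p$). Third, the passage from BDG to the displayed Minkowski-type inequality is not ``BDG twice'': it is Davis's BDG once, then---using that $p$ is even---expanding the $(p/2)$th power of the quadratic variation as a product and applying the generalized H\"older inequality to pull the $L^p$-norm inside the $\d y\,\d s$ integral. Finally, no truncation of $\sigma$ is needed; since $\sigma$ is globally Lipschitz and $u_0$ bounded, the Picard iterates live in $\|\cdot\|_{p,\beta}$ from the start, and the inhomogeneous term from $\sigma(0)\neq 0$ is handled directly (the paper's Lemma~\ref{lem:A1}).
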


\begin{remark}\label{rem:He}
	We recall that $\text{\sl He}_p(x)=2^{-p/2}
	{\sl H}_p(x/2^{1/2})$
	for all $p>0$ and $x\in\R$,
	where $\exp(-2xt-t^2)=\sum_{k=0}^\infty {\sl H}_k(x)t^k/k!$
	for all $t>0$ and $x\in\R$.
	It is not hard to verify that
	\begin{equation}
		z_2 = 1
		\quad\text{and}\quad
		z_4=\sqrt{3+\sqrt{6}}\ \approx 2.334.
	\end{equation}
	This is valid simply because
	$\text{\sl He}_2(x) = x^2-1$ and
	$\text{\sl He}_4(x) = x^4-6x^2+3$. In addition,
	$ z_p\sim 2p^{1/2}$ as $p\to\infty$, and
	$\sup_{p\ge 1} ( z_p/p^{1/2})=2$;
	see Carlen and Kree \cite[Appendix]{CK}.\qed
\end{remark}

Before we explore the sharpness of \eqref{eq:UB}, 
let us examine two
cases that exhibit nonintermittence,
in fact \emph{subexponential growth}.
The first concerns \emph{subdiffusive growth}.

\begin{proposition}\label{pr:exist1}
	If $u_0$ and $\sigma$ are bounded and measurable, then
	for all integers $p\ge 2$,
	\begin{equation}
		\E \left( \left| u(t\,,x) \right|^p \right) =o
		\left( t^{p/2} \right)
		\qquad\text{as $t\to\infty$.}
	\end{equation}
\end{proposition}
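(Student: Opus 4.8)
The plan is to combine the mild formulation of \eqref{heat} with the Burkholder--Davis--Gundy (BDG) inequality in the standard way, but to use the boundedness of $\sigma$ to keep every estimate deterministic; the whole bound then reduces to the single analytic fact that the time-integrated $L^2$-energy of the heat kernel grows strictly sublinearly in $t$.

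First I would write, via \eqref{mild}, $u(t\,,x)=(P_tu_0)(x)+\int_0^t\!\int_\R p_{t-s}(x-y)\,\sigma(u(s\,,y))\,w(\d s\,\d y)$, where $p_r$ denotes the transition density of $X$---which lies in $L^1(\R)\cap L^2(\R)$ for every $r>0$, as a consequence of $\Upsilon(\beta)<\infty$---and $(P_ru_0)(x):=\int_\R p_r(x-y)u_0(y)\,\d y$. Set $S:=\sup_{z\in\R}|\sigma(z)|$ and $U:=\sup_{x\in\R}|u_0(x)|$, both finite, and note $|(P_tu_0)(x)|\le U$ since $p_t$ is a probability density. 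For fixed $t$, the process $r\mapsto\int_0^r\!\int_\R p_{t-s}(x-y)\sigma(u(s\,,y))\,w(\d s\,\d y)$, $r\in[0\,,t]$, is a martingale whose quadratic variation at time $t$ is $\int_0^t\!\int_\R p_{t-s}(x-y)^2\sigma(u(s\,,y))^2\,\d y\,\d s\le S^2 J(t)$, where $J(t):=\int_0^t\!\int_\R p_{t-s}(x-y)^2\,\d y\,\d s$; hence, by $|a+b|^p\le 2^{p-1}(|a|^p+|b|^p)$ and BDG, there is $c_p<\infty$ with
\[
	\E\big(|u(t\,,x)|^p\big)\ \le\ 2^{p-1}U^p\ +\ 2^{p-1}c_p\,S^p\,J(t)^{p/2}\qquad\text{for every real }p\ge 2.
\]
(The $p$-th moments are finite, so this is legitimate: that is part of the a priori bounds of \S\ref{sec:apriori}, or else follows by applying the same inequality to the Picard iterates, whose $L^p(\P)$-limit is $u$.)

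The crux is to show $J(t)=o(t)$ as $t\to\infty$. By Parseval's identity, $\int_\R p_r(z)^2\,\d z=\tfrac1{2\pi}\int_\R|e^{-r\Psi(\xi)}|^2\,\d\xi=\tfrac1{2\pi}\int_\R e^{-2r\Re\Psi(\xi)}\,\d\xi=:\Upsilon_0(r)$, so $J(t)=\int_0^t\Upsilon_0(r)\,\d r$. The function $\Upsilon_0$ is nonincreasing (because $\Re\Psi\ge0$), finite on $(0\,,\infty)$, and locally integrable on $[0\,,\infty)$---the last two because $\int_0^\infty e^{-\beta r}\Upsilon_0(r)\,\d r=\Upsilon(\beta)<\infty$. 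By dominated convergence, $\Upsilon_0(r)\downarrow\tfrac1{2\pi}\,\mathrm{Leb}\{\xi:\Re\Psi(\xi)=0\}$ as $r\to\infty$, and this limit is $0$: were $\mathrm{Leb}\{\Re\Psi=0\}>0$, the L\'evy--Khintchine representation $\Re\Psi(\xi)=\tfrac12 a\xi^2+\int_\R(1-\cos\xi y)\,\nu(\d y)$ would force the Gaussian coefficient $a$ and the L\'evy measure $\nu$ to vanish, so $\Psi$ would be purely imaginary and $\Upsilon\equiv\infty$, contradicting our standing assumption. A Ces\`aro argument---split $\int_0^t=\int_0^R+\int_R^t$ with $R$ so large that $\Upsilon_0\le\ve$ on $[R\,,\infty)$---then gives $\limsup_{t\to\infty}J(t)/t\le\ve$ for each $\ve>0$, i.e.\ $J(t)=o(t)$.

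Inserting $J(t)=o(t)$ into the displayed bound yields $\E(|u(t\,,x)|^p)\le 2^{p-1}U^p+2^{p-1}c_pS^p(o(t))^{p/2}=o(t^{p/2})$ for every $p\ge2$ (here $U^p=O(1)=o(t^{p/2})$), which is the assertion. If one prefers to invoke only the even-order BDG inequality, a general integer $p\ge2$ is reduced to the next even integer through $\E|u|^p\le(\E|u|^{2\lceil p/2\rceil})^{p/(2\lceil p/2\rceil)}$. The only genuinely non-routine step is the sublinearity $J(t)=o(t)$; the rest is boilerplate once $\sigma$ is bounded, and the Liapounov-exponent bound of Theorem~\ref{th:exist} is not needed.
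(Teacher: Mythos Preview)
Your proof is correct and follows essentially the same route as the paper's: bound $\|u(t\,,x)\|_{L^p(\P)}$ via BDG by a constant plus a multiple of $\bigl(\int_0^t\|p_s\|_{L^2(\R)}^2\,\d s\bigr)^{1/2}$, then show this integral is $o(t)$ using that $\|p_s\|_{L^2(\R)}^2$ is nonincreasing and tends to zero. Your L\'evy--Khintchine argument that $\mathrm{Leb}\{\Re\Psi=0\}=0$ is a detail the paper glosses over, and your Ces\`aro splitting plays the same role as the paper's change of variables $s\mapsto st$ followed by dominated convergence.
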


\begin{remark}\label{rem:subdiff:sharp}
	The preceding is optimal; for instance,
	when $p=2$, the ``$o(t)$'' cannot 
	in general be improved to ``$o(t^\rho)$'' for any $\rho<1$.
	Indeed, consider the case that $\sL=-(-\Delta)^{\alpha/2}$
	is the fractional Laplacian. It is easy to see
	that $\Upsilon(\beta)<\infty$ for some $\beta>0$ iff
	$\alpha\in(1\,,2]$. If $0<\inf_{z\in\R}|\sigma(z)|
	\le\sup_{z\in\R}|\sigma(z)|<\infty$, then
	$\E ( | u(t\,,x) |^2 )$ is bounded above and below by
	constant multiples of $t^{(\alpha-1)/\alpha}$.
	We omit the details.\qed
\end{remark}

For our second proposition we first recall
the symmetrized L\'evy process $\bar{X}$ from \eqref{def:Xbar}.

\begin{proposition}\label{pr:trans}
	If $\bar{X}$ is transient, then for all 
	integers $p\ge 2$ there exists
	$\delta(p)>0$ such that $\bar\gamma(p)=0$ whenever
	$\lip_\sigma\!<\delta(p)$.
\end{proposition}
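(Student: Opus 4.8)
The plan is to obtain Proposition \ref{pr:trans} directly from Theorem \ref{th:exist}; the only real work is to recast transience of $\bar X$ as finiteness of $\Upsilon(0+):=\lim_{\beta\downarrow0}\Upsilon(\beta)$. For this, recall that $\bar X$ is a symmetric L\'evy process with characteristic exponent $\xi\mapsto2\Re\Psi(\xi)$, so under our standing hypothesis it has a bounded density $\bar p_r$ at each time $r>0$, with $\bar p_r(0)=(2\pi)^{-1}\int_\R e^{-2r\Re\Psi(\xi)}\,\d\xi$ (cf.\ \cite{FKN}). Comparing with \eqref{eq:Upsilon} and applying Tonelli gives $\Upsilon(\beta)=\int_0^\infty e^{-\beta r}\,\bar p_r(0)\,\d r$, and monotone convergence then identifies $\Upsilon(0+)=\int_0^\infty\bar p_r(0)\,\d r$ with the $0$-potential density of $\bar X$ at the origin.

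Next I would invoke the classical criterion for transience of a symmetric L\'evy process, phrased through integrability of the reciprocal of its characteristic exponent near the origin (see, e.g., \cite{Bertoin} or \cite{FKN}): since the standing hypothesis already disposes of $\Upsilon$ near $|\xi|=\infty$, the potential density $\Upsilon(0+)$ is finite if and only if $\bar X$ is transient. This is the step I expect to need the most care to state cleanly; only the implication ``$\bar X$ transient $\Rightarrow\Upsilon(0+)<\infty$'' is actually used.

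With $\Upsilon(0+)<\infty$ in hand, the remainder is bookkeeping around \eqref{eq:UB}. For an even integer $p\ge2$: since $\Upsilon$ is nonincreasing, $\Upsilon(2\beta/p)\le\Upsilon(0+)$ for every $\beta>0$, so as soon as $\lip_\sigma$ is small enough that $\Upsilon(0+)<(z_p\lip_\sigma)^{-2}$---equivalently, $\lip_\sigma<\delta(p):=(z_p^2\,\Upsilon(0+))^{-1/2}$, which is strictly positive because $0<\Upsilon(0+)<\infty$---the constraint $\Upsilon(2\beta/p)<(z_p\lip_\sigma)^{-2}$ in \eqref{eq:UB} holds for \emph{every} $\beta>0$, so the infimum there equals $0$ and Theorem \ref{th:exist} gives $\bar\gamma(p)\le0$. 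For an arbitrary integer $p\ge2$ I would set $q:=2\lceil p/2\rceil$ and $\delta(p):=\delta(q)$; Lyapunov's inequality $\E(|u(t,x)|^p)\le\E(|u(t,x)|^q)^{p/q}$ then gives $\bar\gamma(p)\le\frac pq\,\bar\gamma(q)\le0$ whenever $\lip_\sigma<\delta(p)$. The reverse inequality is routine: taking expectations in the mild formulation \eqref{mild} leaves $\E(u(t,x))=(P_t u_0)(x)$, so $\E(|u(t,x)|^p)\ge|(P_t u_0)(x)|^p$ by Jensen, and the right-hand side is bounded below by a positive constant in the cases of interest (e.g.\ when $\inf_x u_0(x)>0$), giving $\bar\gamma(p)=0$ there.

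It is worth recording the mechanism behind this: \eqref{eq:UB} is itself produced in \S\ref{sec:proofs} by a weighted-norm Picard (optimal-regularity) argument, and transience of $\bar X$ is exactly the hypothesis under which $\int_0^\infty\bar p_r(0)\,\d r<\infty$, which is precisely what lets that contraction be run with the exponential weight removed. Thus for $\lip_\sigma<\delta(p)$ the solution is in fact bounded in $L^p(\P)$, uniformly in $(t\,,x)$---a conclusion genuinely stronger than $\bar\gamma(p)\le0$. One could equally carry out this unweighted contraction directly instead of quoting \eqref{eq:UB}; the arithmetic is the same either way.
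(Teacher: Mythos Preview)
Your proposal is correct and follows essentially the same route as the paper: both arguments identify transience of $\bar X$ with $\Upsilon(0+)<\infty$ and then read off $\bar\gamma(p)\le 0$ from Theorem~\ref{th:exist} with the same choice $\delta(p)=(z_p^2\,\Upsilon(0+))^{-1/2}$. The paper simply cites the Chung--Fuchs--type criterion \eqref{Xbar:rec} from Bertoin to get $\Upsilon(0+)<\infty$, whereas you go via the $0$-potential density; either works. Your extra step handling odd integers $p$ through Lyapunov's inequality is a detail the paper glosses over, and your remark about the lower bound (and about uniform $L^p(\P)$-boundedness) is a harmless embellishment the paper does not pursue.
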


\begin{example}
	The conditions of Proposition \ref{pr:trans}
	are not vacuous.
	For instance, $\Psi(\xi)=|\xi|^\alpha+|\xi|^\rho$ is
	the exponent of a symmetric L\'evy process
	$\bar{X}$. Moreover, if $\alpha\in(0\,,1)$
	and $\rho\in(1\,,2]$, then $\bar{X}$ is
	transient and has local times.\qed
\end{example}

Our next result addresses the sharpness of 
\eqref{eq:UB}, and establishes an easy-to-check
sufficient criterion for $u$ to be
weakly intermittent. Throughout,
$\Upsilon^{-1}$ denotes the inverse to $\Upsilon$ in
the following sense:
\begin{equation}
	\Upsilon^{-1}(t) := \sup\left\{\beta>0:\
	\Upsilon(\beta)>t\right\},
\end{equation}
where $\sup\varnothing:=0$.

\begin{theorem}\label{th:behavior}
	If $\inf_{z\in\R} u_0(z)>0$ and
	$q:=\inf_{x\neq 0} |\sigma(x)/x|>0$, then
	\begin{equation}\label{eq:BE}
		\bar\gamma(2) \ge 
		\Upsilon^{-1}\left( \frac{1}{q^2}\right)>0.
	\end{equation}
\end{theorem}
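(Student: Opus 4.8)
\textbf{Proof proposal for Theorem \ref{th:behavior}.}

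The plan is to obtain a lower bound on the second moment directly from the mild formulation and an iteration of the Walsh isometry, in the spirit of a renewal/sub-additivity argument. Write the mild solution as
\begin{equation}\label{mild}
	u(t\,,x) = (p_t * u_0)(x) + \int_{(0,t)\times\R}
	p_{t-s}(y-x)\,\sigma(u(s\,,y))\,w(\d s\,\d y),
\end{equation}
where $p_t$ is the transition density of $X$ (which exists under the running hypothesis $\Upsilon(\beta)<\infty$). Since $u_0\ge c:=\inf_z u_0(z)>0$ and $\sigma^2(z)\ge q^2 z^2$, taking second moments and using the Walsh isometry together with nonnegativity of $u$ (hence $\E[u(s\,,y)]\ge c$ and more importantly $\E[u(s\,,y)^2]\ge(\E u(s\,,y))^2$) gives
\begin{equation}\label{eq:renewal}
	\E\!\left(u(t\,,x)^2\right) \ge c^2 + q^2
	\int_0^t \d s \int_\R \d y\; p_{t-s}(y-x)^2\,
	\E\!\left(u(s\,,y)^2\right).
\end{equation}
First I would verify that the constant-in-$x$ lower bound is preserved: if $\E(u(s\,,y)^2)\ge H(s)$ uniformly in $y$, then \eqref{eq:renewal} propagates this with $H$ replaced by $c^2 + q^2\int_0^t H(s)\,\|p_{t-s}\|_{L^2(\R)}^2\,\d s$; note $\int_\R p_r(y-x)^2\,\d y = (p_{2r}*\tilde p_{2r})(0)$ type quantity is exactly $\frac1{2\pi}\int_\R e^{-2r\Re\Psi(\xi)}\,\d\xi=:\Upsilon_0(r)$, whose Laplace transform in $r$ is $\Upsilon(\beta)$. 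So the problem reduces to the scalar renewal inequality $H\ge c^2 + q^2\,(\Upsilon_0 * H)$.

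Next I would extract exponential growth from this renewal inequality by Laplace transforms. Fix $\beta>0$ with $\Upsilon(\beta)>1/q^2$ — such $\beta$ exists precisely when $\Upsilon^{-1}(1/q^2)>0$, since $\Upsilon$ is continuous, strictly decreasing, and $\Upsilon(0^+)=\infty$ when $\bar X$ is recurrent; the hypothesis $q>0$ with $\Upsilon(1/q^2)$-comparison is what makes the set nonempty. Then one argues, e.g. by a Gronwall-type / Picard comparison with the linear renewal equation $h=c^2+q^2(\Upsilon_0*h)$, that $\int_0^\infty e^{-\beta t}H(t)\,\d t=\infty$ whenever $q^2\Upsilon(\beta)\ge 1$, which forces $\limsup_{t\to\infty}\frac1t\ln H(t)\ge\beta$. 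Taking the supremum over admissible $\beta$ yields $\bar\gamma(2)\ge\Upsilon^{-1}(1/q^2)$, and this is strictly positive by the same continuity/monotonicity of $\Upsilon$. The final strict inequality $\Upsilon^{-1}(1/q^2)>0$ is where one uses that the symmetrization is recurrent (equivalently $\Upsilon(\beta)\to\infty$ as $\beta\downarrow 0$); if instead one only wants the weaker conclusion, $q^2\Upsilon(\beta)>1$ for some $\beta>0$ suffices.

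The main obstacle I anticipate is making the renewal/Laplace comparison rigorous rather than formal: one must justify that $H(t):=\inf_{x}\E(u(t\,,x)^2)$ is finite, measurable, and locally bounded (so the convolutions converge), handle the possibility that $\Upsilon_0(r)$ blows up as $r\downarrow 0$ (it is integrable near $0$ precisely because $\Upsilon(\beta)<\infty$, so $\Upsilon_0*H$ is well-defined), and then pass from ``the Laplace transform diverges at $\beta$'' to ``exponential growth rate $\ge\beta$.'' The cleanest route is to iterate \eqref{eq:renewal}: define $H_0\equiv c^2$ and $H_{n+1}(t)=c^2+q^2(\Upsilon_0*H_n)(t)$, observe $H\ge H_n$ for every $n$ by induction, compute $\widehat{H_n}(\beta)=c^2\beta^{-1}\sum_{k=0}^n (q^2\Upsilon(\beta))^k$, and let $n\to\infty$ to get $\widehat H(\beta)=\infty$ as soon as $q^2\Upsilon(\beta)\ge 1$. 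A standard Tauberian/elementary argument (if $\bar\gamma(2)<\beta$ then $\widehat H(\beta')<\infty$ for $\beta'$ slightly below $\beta$, contradiction) finishes it.
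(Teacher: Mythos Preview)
Your approach is essentially the paper's: both derive the second-moment renewal inequality from the mild formulation and Walsh's isometry, iterate it to obtain the geometric series $\sum_n (q^2\Upsilon(\beta))^n$, show the Laplace transform diverges whenever $q^2\Upsilon(\beta)\ge 1$, and then pass to $\bar\gamma(2)$ by contradiction. The only cosmetic difference is that the paper keeps the spatial variable and iterates the convolution operator $(\sH f)(x)=q^2(H_\beta * f)(x)$, whereas you take $H(t)=\inf_x\E(u(t,x)^2)$ and reduce to a scalar renewal inequality in $t$ alone; both routes land on the same series.

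One correction: drop the appeal to ``nonnegativity of $u$.'' The paper does not establish it, and you do not need it. The identity
\[
\E\!\left(u(t,x)^2\right)=\bigl((\breve p_t*u_0)(x)\bigr)^2+\int_0^t\!\!\int_\R\E\!\left(\sigma(u(s,y))^2\right)p_{t-s}(y-x)^2\,\d y\,\d s
\]
holds simply because the Walsh integral has mean zero, regardless of the sign of $u$; the bound $(\breve p_t*u_0)(x)\ge c$ follows directly from $u_0\ge c$, and $\E[u^2]\ge(\E u)^2$ is never used. Your observation that the strict inequality $\Upsilon^{-1}(1/q^2)>0$ genuinely requires either recurrence of $\bar X$ or $q$ large enough is correct and is a point the paper's proof does not spell out.
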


Our next result is a ready corollary of Theorems \ref{th:exist}
and \ref{th:behavior}; see Carmona and Molchanov \cite[p.\ 59]{CM94},
Cranston and Molchanov \cite{CM1}, and
G\"artner and den Hollander \cite{GartnerdenHollander} for 
phenomenogically-similar results.
It might help to recall \eqref{def:Xbar}.

\begin{corollary}\label{cor:L:Anderson}
	If $\sigma(x):=\lambda x$ and $\inf_{x\in\R}u_0(x)>0$,
	then:
	\begin{enumerate}
	\item
		If $\bar{X}$ is recurrent, then $u$
		is weakly intermittent;
	\item
		If $\bar{X}$ is transient,
		then $u$ is weakly intermittent if and only if
		$\Upsilon(\beta)\ge \lambda^{-2}$ for some $\beta>0$; and
	\item  In all the cases that $u$ is weakly intermittent, 
		$\bar\gamma(2)= \Upsilon^{-1} (\lambda^{-2})$.
	\end{enumerate}
\end{corollary}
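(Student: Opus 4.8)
The plan is to obtain Corollary~\ref{cor:L:Anderson} by specializing Theorems~\ref{th:exist} and~\ref{th:behavior} to the linear case $\sigma(x)=\lambda x$, where $\lip_\sigma=|\lambda|$ and $q=\inf_{x\neq 0}|\sigma(x)/x|=|\lambda|$, so the two a~priori bounds become comparable quantities involving $\Upsilon^{-1}(\lambda^{-2})$. First I would treat the upper bound: apply Theorem~\ref{th:exist} with $p=2$ and $z_2=1$ (from Remark~\ref{rem:He}) to get $\bar\gamma(2)\le\inf\{\beta>0:\Upsilon(\beta)<\lambda^{-2}\}$, and observe that since $\Upsilon$ is continuous, strictly decreasing, with $\Upsilon(0+)=\infty$ (when $\bar X$ is recurrent) or $\Upsilon(0+)<\infty$ (when $\bar X$ is transient) and $\Upsilon(\infty)=0$, this infimum equals exactly $\Upsilon^{-1}(\lambda^{-2})$ in the sense defined just before Theorem~\ref{th:behavior}; the recurrence/transience dichotomy here is precisely Hawkes' criterion, namely $\bar X$ is recurrent iff $\Upsilon(0+)=\infty$. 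Then I would invoke Theorem~\ref{th:behavior}, whose hypotheses are met since $\inf_z u_0(z)>0$ and $q=|\lambda|>0$, to get $\bar\gamma(2)\ge\Upsilon^{-1}(\lambda^{-2})$. Combining, $\bar\gamma(2)=\Upsilon^{-1}(\lambda^{-2})$ whenever this quantity is positive, which gives part~(3).

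For part~(1), when $\bar X$ is recurrent we have $\Upsilon(\beta)\to\infty$ as $\beta\downarrow 0$, so $\Upsilon(\beta)>\lambda^{-2}$ for all sufficiently small $\beta>0$, whence $\Upsilon^{-1}(\lambda^{-2})>0$; then Theorem~\ref{th:behavior} forces $\bar\gamma(2)>0$, and Theorem~\ref{th:exist} (applied to each even $p$) gives $\bar\gamma(p)<\infty$ for all $p>2$, so $u$ is weakly intermittent by definition~\eqref{def:WI}. For part~(2), when $\bar X$ is transient, $c:=\Upsilon(0+)=\sup_{\beta>0}\Upsilon(\beta)<\infty$; if $\lambda^{-2}<c$, i.e.\ $\Upsilon(\beta)\ge\lambda^{-2}$ for some $\beta>0$, then again $\Upsilon^{-1}(\lambda^{-2})>0$ and the same argument shows weak intermittence; conversely if $\lambda^{-2}\ge c$, then $\Upsilon(\beta)<\lambda^{-2}$ for all $\beta>0$, so the infimum in~\eqref{eq:UB} with $p=2$ is $0$, giving $\bar\gamma(2)\le 0$, hence $\bar\gamma(2)=0$ (nonnegativity follows from $u\ge 0$ and the discussion of $\bar\gamma(1)$ in the introduction, or more simply since $\bar\gamma$ is nonnegative for a solution whose second moment does not decay) and $u$ is not weakly intermittent. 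One should double-check the boundary case $\lambda^{-2}=c$: here $\Upsilon(\beta)<c=\lambda^{-2}$ strictly for every $\beta>0$ since $\Upsilon$ is strictly decreasing and $c$ is only attained in the limit, so this falls under the non-intermittent side, consistent with the stated ``$\Upsilon(\beta)\ge\lambda^{-2}$ for some $\beta>0$'' criterion.

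The only genuinely delicate point, and the one I would be most careful about, is the identification of $\inf\{\beta>0:\Upsilon(2\beta/p)<(z_p\lip_\sigma)^{-2}\}$ with $\Upsilon^{-1}((z_p\lip_\sigma)^{-2})$ at $p=2$: one must verify that the ``$\sup\{\beta:\Upsilon(\beta)>t\}$'' convention and the ``$\inf\{\beta:\Upsilon(\beta)<t\}$'' expression coincide, which they do because $\Upsilon$ is continuous and strictly monotone, but the edge behavior (whether the infimum is a minimum, whether it can be $0$ or $+\infty$) needs the monotonicity and limiting properties of $\Upsilon$ recorded after~\eqref{eq:Upsilon}. Everything else is bookkeeping: matching hypotheses, and using the two theorems as black boxes together with Hawkes' recurrence criterion to translate between ``$\bar X$ recurrent/transient'' and ``$\Upsilon(0+)=\infty$ or not.''
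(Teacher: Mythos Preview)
Your proposal is correct and follows essentially the same approach as the paper's own proof: specialize Theorems~\ref{th:exist} and~\ref{th:behavior} to $\sigma(x)=\lambda x$ (so $\lip_\sigma=q=|\lambda|$ and $z_2=1$), match the resulting upper and lower bounds for $\bar\gamma(2)$ using the monotonicity and limit behaviour of $\Upsilon$, and translate recurrence/transience of $\bar X$ into $\Upsilon(0+)=\infty$ or not (in the paper this appears as \eqref{Xbar:rec1} in the proof of Proposition~\ref{pr:trans}, via the Chung--Fuchs--type criterion in Bertoin, rather than via Hawkes). You are in fact more explicit than the paper in handling the ``only if'' direction of part~(2) and the boundary case $\lambda^{-2}=\Upsilon(0+)$.
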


Even though Corollary \ref{cor:L:Anderson} is concerned with
a very special case of \eqref{heat}, that special case
has a rich history. Indeed, Corollary \ref{cor:L:Anderson}
contains a moment analysis of 
the socalled \emph{parabolic Anderson model}
for $\sL$. When $\sL = \kappa \partial_{xx}$,
that equation arises in
the analysis of branching processes in random environment 
\cite{CM94,Molch91}. If the spatial motion is a L\'evy process
with generator $\sL$, then we arrive at
\eqref{heat} with $\sigma(x)=\lambda x$.
For somewhat related---though not identical---reasons,
the parabolic Anderson model also
paves the way for a mathematical understanding of
the socalled ``KPZ equation'' in dimension
$(1+1)$. For further information see the original paper
by Kardar, Parisi, and Zhang \cite{KPZ}, Chapter 5 of Krug and Spohn
\cite{KrugSpohn}, and the Introduction by 
Carmona and Molchanov \cite{CM94}.

\begin{example}\label{ex:L:Anderson}
	If the conditions of Corollary \ref{cor:L:Anderson}
	hold, then the solution to \eqref{heat} with 
	$\sL=-\kappa(-\Delta)^{\alpha/2}$ is weakly intermittent with
	\begin{equation}\label{eq:E2:stable}
		\bar\gamma(2)=\left(\frac{\nu^\alpha\lambda^{2\alpha}}{
		\kappa}\right)^{1/(\alpha-1)}\!\! \text{where}\quad
		\nu := \frac{\sec(\pi/\alpha)}{2^{1/\alpha}\alpha}.
	\end{equation}
	Of course, we need $\alpha\in(1\,,2]$, and this implies
	that $\bar{X}$ is recurrent; see Remark
	\ref{rem:subdiff:sharp}. In order to derive
	\eqref{eq:E2:stable}, we first recall that
	$\int_0^\infty \d x/(1+x^\alpha)
	=(\pi/\alpha)\sec (\pi/\alpha)$.
	Thus, a direct computation yields
	$\Upsilon(\beta) = \nu\kappa^{-1/\alpha}\,\beta^{-1+(1/\alpha)}$
	for all $\beta>0$.
	Corollary \ref{cor:L:Anderson}, and a few more
	simple calculations, together imply \eqref{eq:E2:stable}.
	A similar argument shows that
	\begin{equation}\label{eq:p_gamma}
		\bar\gamma(p) \le \frac{p}{2}\left(\frac{\nu^\alpha}{\kappa}
		\left(z_p\lambda\right)^{2\alpha}\right)^{1/(\alpha-1)}
		\quad\text{for all even integers $p\ge 2$}.
	\end{equation}
	We can use this in conjunction with the Carlen--Kree inequality
	$[z_p\le 2\sqrt p$; see Remark \ref{rem:He}] to obtain 
	explicit numerical bounds.
	\qed
\end{example}

In the special case that $\sL=\kappa \partial_{xx}$,
Example \ref{ex:L:Anderson} tells that $\bar\gamma(2)
=\lambda^4/(8\kappa)$ for all $x\in\R$. This formula
is anticipated by the earlier investigations 
of Lieb and Liniger \cite{LL63}
and Kardar \cite[Eq.\ (2.9)]{Kardar} in statistical physics;
it can also be deduced upon combining the results
of Bertini and Cancrini \cite{BC}, in the exact case $u_0\equiv 1$,
with Mueller's comparison principle \cite{Mueller}.
Carmona and Molchanov \cite[p.\ 59]{CM94} study a closely-related
parabolic Anderson model in which $\dot{w}(t\,,x)$
is white noise over $(t\,,x)\in\R_+\times\Z^d$.

It is also easy to see that the bound furnished by
\eqref{eq:p_gamma} is nearly sharp in the case that $\alpha=2$
and $p>2$.
For example, \eqref{eq:p_gamma} and the Carlen--Kree
inequality $[z_p\le 2\sqrt p]$ together yield
$\bar\gamma(p)\le \vartheta(p):= (p^3\lambda^4)/\kappa$,
valid for all even integers $p\ge 2$.
When $p\ge 2$ is an arbitrary integer,
the exact answer is $\bar\gamma(p)= p(p^2-1)\lambda^4/(48\kappa)$
\cite{BC,Kardar,LL63}, and the $\limsup$ in the definition
of $\bar\gamma$ is a bona fide limit.
Our bound $\vartheta(p)$ agrees well with the
exact answer in this special case. Indeed,
\begin{equation}
	1\le\frac{\vartheta(p)}{\bar\gamma(p)}\le 48\left(
	1+\frac{1}{p^2-1}\right),
\end{equation}
uniformly for all even integers $p\ge 2$, as well as all
$\lambda\in\R$ and $\kappa\in(0\,,\infty)$.

We close with a result that states roughly that if
$\sigma$ is asymptotically linear and $\bar{X}$ is
recurrent, then a sufficiently large initial data will
ensure intermittence. More
precisely, we have the following.

\begin{theorem}\label{th:sublinear}
	Suppose $\bar{X}$ is recurrent, and
	$q:=\liminf_{|x|\to\infty}
	|\sigma(x)/x|>0$.
	Then, there exists $\eta_0>0$
	such that whenever $\eta:=\inf_{x\in\R}u_0(x)\ge\eta_0$,
	the solution $u$ is weakly intermittent.
\end{theorem}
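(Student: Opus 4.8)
The plan is to deduce Theorem~\ref{th:sublinear} from Theorem~\ref{th:behavior} by a truncation-and-comparison argument on the second moment, using the recurrence hypothesis to make $\Upsilon^{-1}$ available on a large enough interval. The key point is that recurrence of $\bar X$ is precisely the statement (via the Chung--Fuchs-type criterion, or directly from \eqref{eq:Upsilon}) that $\Upsilon(0+)=\lim_{\beta\downarrow 0}\Upsilon(\beta)=+\infty$; consequently $\Upsilon^{-1}(t)>0$ for \emph{every} $t>0$, no matter how small. So if we can replace the hypothesis $q:=\inf_{x\neq 0}|\sigma(x)/x|>0$ of Theorem~\ref{th:behavior} by the weaker asymptotic hypothesis $q:=\liminf_{|x|\to\infty}|\sigma(x)/x|>0$ at the cost of only a harmless loss in the constant, we will be done, because whatever constant $\tilde q>0$ we end up with will still give $\Upsilon^{-1}(\tilde q^{-2})>0$.

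First I would fix $q_0\in(0\,,q)$ and choose $R$ so large that $|\sigma(x)|\ge q_0|x|$ for all $|x|\ge R$. The obstruction is that on $[-R\,,R]$ the ratio $\sigma(x)/x$ may be small or zero, so I cannot directly bound $\sigma(u)$ below by a linear function of $u$; this is exactly where the ``sufficiently large initial data'' hypothesis enters. The strategy is to show that when $\eta=\inf_x u_0(x)$ is large enough, the solution stays (in second moment) well above $R$, so that the relevant values of $u$ are in the regime where $\sigma$ is genuinely linearly growing. Concretely, from the mild formulation \eqref{mild} and the fact that $u_0\ge\eta>0$ together with positivity of the heat kernel, one has $\E[u(t\,,x)]\ge\eta$ for all $t\,,x$; more importantly, one runs the second-moment inequality of \S\ref{sec:apriori}: writing $f(t):=\inf_x\E(|u(t\,,x)|^2)$, the renewal/Gronwall-type inequality behind Theorem~\ref{th:behavior} gives a lower bound of the form $f(t)\ge \eta^2 + q_0^2\,(k * f)(t) - (\text{correction from }|x|<R)$, where $k$ is the relevant convolution kernel with $\int_0^\infty e^{-\beta s}k(s)\,\d s = \Upsilon(\beta)/(2\pi)$ or a comparable expression. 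The correction term is controlled because on the event (or region of the space of possible values) where $|u|<R$ the contribution to the stochastic-integral variance is at most $(\sup_{|z|\le R}|\sigma(z)|)^2$, a fixed constant independent of $\eta$.

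The main obstacle, and the step I would spend the most care on, is making this ``stay above $R$'' mechanism rigorous at the level of \emph{moments} rather than pathwise — i.e. converting the heuristic ``$u$ is large, hence $\sigma(u)\gtrsim q_0 u$'' into a clean Gronwall inequality. The cleanest route I see is: let $\tilde\sigma$ be a globally Lipschitz function with $\tilde\sigma(x)=q_0 x$ for $|x|\ge R$ and $|\tilde\sigma(x)|\le |\sigma(x)| + C_R$ everywhere (for a constant $C_R$ depending only on $R$ and $\lip_\sigma$), and compare. Alternatively, and perhaps more directly, run the second-moment renewal inequality with the true $\sigma$ but bound $\sigma(u)^2 \ge q_0^2 u^2 \cdot \1_{\{|u|\ge R\}} \ge q_0^2 u^2 - q_0^2 R^2$; substituting into the mild-solution variance identity yields $f(t)\ge \eta^2 + \frac{q_0^2}{2\pi}\,(\mathcal{K}*f)(t) - \frac{q_0^2 R^2}{2\pi}(\mathcal{K}*1)(t)$ with $\mathcal{K}$ the kernel whose Laplace transform is $\Upsilon$. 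Taking Laplace transforms in $t$, one gets $\hat f(\beta)\bigl(1 - q_0^2\Upsilon(\beta)\bigr) \ge \beta^{-1}(\eta^2 - q_0^2 R^2\,\Upsilon(\beta))$; picking $\beta$ with $q_0^2\Upsilon(\beta)=1$ (possible since $\Upsilon(0+)=\infty$ by recurrence) forces $\eta^2 - q_0^2 R^2 \Upsilon(\beta) \le 0$, i.e. it forces $\eta \le R$. Turning this around: if $\eta_0 := R\sqrt{q_0^2\Upsilon(\beta_\star)} $ — no; more carefully, the dichotomy is that \emph{either} $\hat f(\beta)$ blows up as $\beta\downarrow\Upsilon^{-1}(q_0^{-2})$, giving $\bar\gamma(2)\ge\Upsilon^{-1}(q_0^{-2})>0$, \emph{or} the inequality is consistent only when $\eta$ is bounded by an explicit constant $\eta_0$ depending on $R$, $q_0$, and $\Upsilon$. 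So setting $\eta_0$ equal to that explicit constant, $\eta\ge\eta_0$ rules out the second alternative and yields weak intermittence with $\bar\gamma(2)\ge\Upsilon^{-1}(q_0^{-2})$; the upper bound $\bar\gamma(p)<\infty$ for $p>2$ is already supplied by Theorem~\ref{th:exist}. I would double-check the precise form of the kernel $\mathcal{K}$ and the constant $\eta_0$ against the a priori estimates of \S\ref{sec:apriori}, but structurally this is the argument.
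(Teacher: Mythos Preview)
Your proposal is correct and follows essentially the same route as the paper. The paper too modifies the proof of Theorem~\ref{th:behavior}: it fixes $q_0\in(0,q)$, finds $A$ with $|\sigma(z)|\ge q_0|z|$ for $|z|>A$, uses exactly your pointwise bound $\E(|\sigma(u)|^2)\ge q_0^2\E(|u|^2)-q_0^2A^2$, and feeds this into the second-moment identity \eqref{b/c} to get a perturbed renewal inequality $F_\beta\ge G_\beta+\sH F_\beta - (q_0^2A^2/\beta)\Upsilon(\beta)$; recurrence supplies a $\beta$ with $q_0^2\Upsilon(\beta)>1$, and then $\eta$ large (specifically $\eta^2>A^2q_0^2\Upsilon(\beta)$) forces $F_\beta\equiv\infty$. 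The only cosmetic difference is that the paper iterates the operator $\sH^n$ and sums the resulting telescoping series, whereas you phrase the same computation as a single Laplace-transform identity $\hat f(\beta)(1-q_0^2\Upsilon(\beta))\ge\beta^{-1}(\eta^2-q_0^2R^2\Upsilon(\beta))$ and argue by contradiction; these are equivalent, and your threshold $\eta_0$ comes out the same up to the choice of $\beta$.
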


We believe this result presents a notable improvement
on the content of Theorem \ref{th:behavior} in the case
that $\bar{X}$ is recurrent.

\section{A priori bounds}\label{sec:apriori}
Before we prove the mathematical
assertions of \S\ref{sec:main},
let us develop some of the required background.
Throughout we note the following elementary bound:
\begin{equation}\label{eq:sigma:linear}
	|\sigma(x)|\le |\sigma(0)|+\lip_\sigma|x|
	\qquad\text{for all $x\in\R$}.
\end{equation}

Define $\{\sP_t\}_{t\ge 0}$ as the semigroup associated with
$\sL$.
According to Lemma 8.1 of Foondun et al.\
\cite{FKN}, there exist transition densities
$\{p_t\}_{t> 0}$, whence we have
\begin{equation}
	(\sP_tg)(x) = \int_{-\infty}^\infty p_t(y-x)g(y)\,\d y
	\qquad(t>0).
\end{equation}
For us, the following relation is also significant:
$(\sP_t^*g)(x) = (\breve{p}_t*g)(x)$,
where $\sP_t^*$ denotes the adjoint of $\sP_t$ in $L^2(\R)$,
and $\breve{p}_t(x):=p_t(-x)$.

Consider
\begin{equation}
	(\sG u_0)(t\,,x) := (\sP_tu_0)(x)=(\breve{p}_t*u_0)(x)\qquad
	\text{for all $t>0$ and $x\in\R$.}
\end{equation}
We define also $(\sG u_0)(0\,,x):=u_0(x)$ for all $x\in\R$.
The function $v=\sG u_0$ solves the nonrandom integro-differential
equation
\begin{equation}\label{heat:nonrandom}\left|\begin{split}
	&\partial_t v = \sL v\hskip1in \text{on $(0\,,\infty)\times\R$},\\
	&v(0\,,x) = u_0(x)\hskip0.62in\text{for all $x\in\R$}.
\end{split}\right.\end{equation}
Thus, we can follow the terminology and methods of Walsh
\cite{Walsh} closely to deduce that
\eqref{heat} admits a mild solution $u$ if and only if $u$ 
is a predictable process that solves
\begin{equation}\label{mild}
	u(t\,,x) = (\sG u_0)(t\,,x)
	+\int_{-\infty}^\infty\int_0^t 
	\sigma(u(s\,,y)) p_{t-s}(y-x)\, w(\d s\,\d y).
\end{equation}

We begin by making two simple computations. The first
is a basic potential-theoretic bound.

\begin{lemma}\label{lem:monotone}
	For all $\beta>0$,
	\begin{equation}\label{eq:monotone}
		\sup_{t>0} e^{-\beta t}\int_0^t\|p_s\|_{L^2(\R)}^2\,\d s
		\le 
		\int_0^\infty e^{-\beta s} \|p_s\|_{L^2(\R)}^2\,\d s
		=\Upsilon(\beta).
	\end{equation}
\end{lemma}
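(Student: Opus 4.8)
The plan is to prove the equality first and then deduce the inequality as an immediate consequence. For the equality, I would start from the definition \eqref{eq:Upsilon} of $\Upsilon(\beta)$ as a Fourier-side integral and use Plancherel's theorem together with the fact that $\hat p_s(\xi)=\exp(-s\Psi(\xi))$. Since $p_s$ is a probability density, $\widehat{p_s}(\xi)=\mathrm{E}\exp(i\xi X_s)=e^{-s\Psi(\xi)}$, so $\|p_s\|_{L^2(\R)}^2=\frac{1}{2\pi}\int_{-\infty}^\infty |e^{-s\Psi(\xi)}|^2\,\d\xi=\frac{1}{2\pi}\int_{-\infty}^\infty e^{-2s\Re\Psi(\xi)}\,\d\xi$. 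Then I would integrate against $e^{-\beta s}$ over $s\in(0\,,\infty)$ and apply Tonelli's theorem (everything is nonnegative) to interchange the $s$- and $\xi$-integrals, computing $\int_0^\infty e^{-\beta s}e^{-2s\Re\Psi(\xi)}\,\d s=(\beta+2\Re\Psi(\xi))^{-1}$ since $\Re\Psi(\xi)\ge 0$ for the exponent of a L\'evy process. This recovers exactly $\Upsilon(\beta)$ as defined in \eqref{eq:Upsilon}.

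For the inequality, the point is simply that $t\mapsto e^{-\beta t}\int_0^t\|p_s\|_{L^2(\R)}^2\,\d s$ is dominated pointwise by $\int_0^\infty e^{-\beta s}\|p_s\|_{L^2(\R)}^2\,\d s$: indeed, writing $F(t):=\int_0^t\|p_s\|_{L^2(\R)}^2\,\d s$, one has $e^{-\beta t}F(t)\le\int_0^t e^{-\beta s}\|p_s\|_{L^2(\R)}^2\,\d s$ because $e^{-\beta t}\le e^{-\beta s}$ for $s\le t$; and the last integral is at most the integral over all of $(0\,,\infty)$. Taking the supremum over $t>0$ gives \eqref{eq:monotone}.

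There is essentially only one point that requires a word of care, and I would flag it as the main (minor) obstacle: one must be sure that $\|p_s\|_{L^2(\R)}^2<\infty$ for each $s>0$, i.e.\ that $p_s$ really is square-integrable, so that the Plancherel step is legitimate and the quantities appearing are not identically $+\infty$ in a useless way. Under the standing assumption that $\Upsilon(\beta)<\infty$ for all $\beta>0$, the computed identity $\Upsilon(\beta)=\int_0^\infty e^{-\beta s}\|p_s\|_{L^2(\R)}^2\,\d s$ forces $\|p_s\|_{L^2(\R)}^2<\infty$ for (Lebesgue-)a.e.\ $s>0$, and a standard monotonicity/semigroup argument ($\|p_{s+t}\|_{L^2}\le\|p_s\|_{L^1}\|p_t\|_{L^2}=\|p_t\|_{L^2}$) upgrades this to all $s>0$; alternatively one invokes Lemma~8.1 of \cite{FKN}, which already furnishes the transition densities with the needed integrability. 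With that in hand the Tonelli interchange is unconditional and the proof is complete.

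Thus the strategy is: (1) identify $\|p_s\|_{L^2(\R)}^2$ via Plancherel with $\frac{1}{2\pi}\int e^{-2s\Re\Psi(\xi)}\,\d\xi$; (2) Laplace-transform in $s$ and swap integrals by Tonelli to get the stated equality with $\Upsilon(\beta)$; (3) observe the elementary pointwise domination $e^{-\beta t}\int_0^t(\cdots)\le\int_0^t e^{-\beta s}(\cdots)\le\int_0^\infty e^{-\beta s}(\cdots)$ and take $\sup_{t>0}$. No step involves anything beyond routine manipulation once the square-integrability of $p_s$ is recorded.
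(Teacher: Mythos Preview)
Your proposal is correct and follows essentially the same approach as the paper: Plancherel's theorem to obtain $\|p_s\|_{L^2(\R)}^2=\frac{1}{2\pi}\int e^{-2s\Re\Psi(\xi)}\,\d\xi$, then Tonelli's theorem for the equality, with the inequality declared ``obvious'' in the paper and spelled out by you via the pointwise bound $e^{-\beta t}\le e^{-\beta s}$ for $s\le t$. Your added remarks on the square-integrability of $p_s$ go beyond what the paper records but are consistent with its standing assumptions.
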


\begin{proof}
	The inequality is obvious; we
	apply Plancherel's theorem to find that
	\begin{equation}\label{eq:L2p}
		\|p_s\|_{L^2(\R)}^2 = \frac{1}{2\pi}\int_{-\infty}^\infty
		e^{-2s\Re\Psi(\xi)}\,\d\xi\qquad\text{for all $s>0$}.
	\end{equation}
	Therefore, Tonelli's theorem implies the remaining equality.
\end{proof}

Next we present our second elementary estimate.

\begin{lemma}\label{lem:arith}
	For all $a,b\in\R$ and $\epsilon>0$,
	\begin{equation}
		(a+b)^2\le (1+\epsilon)a^2+ \left( 1+\epsilon^{-1}
		\right)b^2.
	\end{equation}
\end{lemma}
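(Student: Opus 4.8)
\textbf{Proof proposal for Lemma \ref{lem:arith}.}

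The plan is to expand the left-hand side and bound the cross term by Young's inequality. First I would write $(a+b)^2 = a^2 + 2ab + b^2$, so the claim reduces to showing $2ab \le \epsilon a^2 + \epsilon^{-1} b^2$ for all real $a,b$ and all $\epsilon>0$. This in turn is exactly the statement that $(\sqrt{\epsilon}\,|a| - \epsilon^{-1/2}|b|)^2 \ge 0$, which is manifestly true; expanding the square gives $\epsilon a^2 - 2|a||b| + \epsilon^{-1} b^2 \ge 0$, and since $2ab \le 2|a||b|$, the desired bound on the cross term follows immediately.

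Combining, $(a+b)^2 = a^2 + 2ab + b^2 \le a^2 + (\epsilon a^2 + \epsilon^{-1} b^2) + b^2 = (1+\epsilon)a^2 + (1+\epsilon^{-1})b^2$, which is the assertion. There is no obstacle here: the entire argument is a single application of the inequality $2|a||b| \le \epsilon a^2 + \epsilon^{-1} b^2$, itself an instance of the AM--GM inequality applied to the nonnegative numbers $\epsilon a^2$ and $\epsilon^{-1} b^2$. I would present it in three short lines and not belabor it further.
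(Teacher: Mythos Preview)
Your proof is correct. The paper takes a slightly different, equally short route: it regards the right-hand side as a function $h(\epsilon)=(1+\epsilon)a^2+(1+\epsilon^{-1})b^2$, observes that $h$ is minimized at $\epsilon=|b/a|$ with minimum value $a^2+2|ab|+b^2=(|a|+|b|)^2\ge(a+b)^2$. Your argument instead fixes $\epsilon$ and bounds the cross term $2ab\le\epsilon a^2+\epsilon^{-1}b^2$ via a completed square. Both are one-line facts; the paper's optimization viewpoint has the minor advantage of exhibiting the extremal $\epsilon$, which is exactly the choice made later in the proof of Lemma~\ref{lem:A1} when $\epsilon$ is set to $|\sigma(0)|/(\lip_\sigma\|f\|_{p,\beta})$.
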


\begin{proof}
	Define $h(\epsilon)$ to be the upper bound of the lemma.
	Then, $h:(0\,,\infty)\to\R_+$ is minimized at
	$\epsilon = |b/a|$, and the minimum value of $h$
	is $a^2+2|ab|+b^2$, which is in turn $\ge(a+b)^2$.
\end{proof}

Now we proceed to establish the remaining
requires estimates. 

For every positive $t$ and all Borel sets
$A\subset\R$, we set $w_t(A):=\dot w([0\,,t]\times A)$,
and let $\mathcal{F}_t$
denote the $\sigma$-algebra generated by all
Wiener integrals of the form $\int g(x)\, w_s(\d x)$,
as the function $g$ ranges over $L^2(\R)$ and 
the real number $s$ ranges over $[0\,,t]$. Without loss of too much
generality we may assume that the resulting filtration
$\mathcal{F}:=\{\mathcal{F}_t\}_{t\ge 0}$ satisfies
the usual conditions, else we enlarge each $\mathcal{F}_t$
in the standard way. Here and throughout, a process
is said to be \emph{predictable} if  it is predictable
with respect to $\mathcal{F}$; see also Walsh \cite[p.\ 292]{Walsh}.

Given a predictable random field $f$, we define
\begin{equation}\label{def:A}
	(\sA  f)(t\,,x) := \int_{-\infty}^\infty\int_0^t
	\sigma(f(s\,,y)) p_{t-s}(y-x)\, w(\d s\,\d y),
\end{equation}
for all $t\ge 0$ and $x\in\R$,
provided that the stochastic integral exists in the sense
of Walsh \cite{Walsh}.
We also define a family of $p$-norms $\{\|f\|_{p,\beta}\}_{\beta>0}$,
one for each integer $p\ge 2$, via
\begin{equation}
	\|f\|_{p,\beta} := \left\{ \sup_{t\ge 0}\sup_{%
	x\in\R} e^{-\beta t}\E\left(\left|
	f(t\,,x)\right|^p\right)\right\}^{1/p}.
\end{equation}
Variants of these norms appear in several places in the SPDE
literature. See, in particular, Peszat and Zabczyk \cite{PZ}.
However, there is a subtle [but very important!] novelty here:
The supremum is taken over all time.

Recall the definition of $z_p$ from Theorem \ref{th:exist}.

\begin{lemma}\label{lem:A1}
	If $f$ is predictable and
	$\|f\|_{p,\beta}<\infty$ for a real $\beta>0$
	and an even integer $p\ge 2$, then
	\begin{equation}
		\|\sA f\|_{p,\beta}
		\le z_p\left( |\sigma(0)|
		+ \lip_\sigma \|f\|_{p,\beta}
		\right) \sqrt{\Upsilon\left(\frac{2\beta}{p}\right)}.
	\end{equation}
\end{lemma}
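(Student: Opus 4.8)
The plan is to estimate the $p$th moment of $(\sA f)(t,x)$ using the Burkholder--Davis--Gundy inequality in its sharp form, i.e.\ the form with constant $z_p$ (this is precisely why $z_p$, the largest zero of the Hermite polynomial $\text{\sl He}_p$, enters). Recall that for a Walsh stochastic integral $M_t=\int_0^t\int_{\R} h(s,y)\,w(\d s\,\d y)$ one has $\|M_t\|_{L^p(\P)}\le z_p\bigl\|\, \int_0^t\int_{\R} h(s,y)^2\,\d y\,\d s\,\bigr\|_{L^{p/2}(\P)}^{1/2}$; this is the optimal-constant version of BDG for such martingales. Applying this with $h(s,y)=\sigma(f(s,y))\,p_{t-s}(y-x)$ gives
\begin{equation}
	\E\bigl(|(\sA f)(t,x)|^p\bigr)
	\le z_p^p\,\left\| \int_0^t\int_{-\infty}^\infty
	|\sigma(f(s,y))|^2\,p_{t-s}(y-x)^2\,\d y\,\d s\right\|_{L^{p/2}(\P)}^{p/2}.
\end{equation}

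Next I would bring the $L^{p/2}$ norm inside the space-time integral via Minkowski's integral inequality, which is legitimate since $p\ge 2$: the $L^{p/2}(\P)$ norm of the double integral is at most the double integral of the $L^{p/2}(\P)$ norms of the integrand. The integrand's $L^{p/2}(\P)$ norm is $\|\,|\sigma(f(s,y))|^2\,\|_{L^{p/2}(\P)}\,p_{t-s}(y-x)^2 = \|\sigma(f(s,y))\|_{L^p(\P)}^2\,p_{t-s}(y-x)^2$. Now I invoke the elementary linear-growth bound \eqref{eq:sigma:linear}, together with the definition of the norm $\|\cdot\|_{p,\beta}$, to get $\|\sigma(f(s,y))\|_{L^p(\P)}\le |\sigma(0)|+\lip_\sigma\|f(s,y)\|_{L^p(\P)}\le |\sigma(0)|+\lip_\sigma\, e^{\beta s/p}\|f\|_{p,\beta}$. (One should be slightly careful here: $\|f(s,y)\|_{L^p(\P)}=(\E|f(s,y)|^p)^{1/p}\le e^{\beta s/p}\|f\|_{p,\beta}$ directly from the definition, so the bound on $\|\sigma(f(s,y))\|_{L^p(\P)}$ is $\le (|\sigma(0)|+\lip_\sigma\|f\|_{p,\beta})\,e^{\beta s/p}$ after bounding $1\le e^{\beta s/p}$, which is valid since $s\ge 0$.) Squaring, the integrand is bounded by $(|\sigma(0)|+\lip_\sigma\|f\|_{p,\beta})^2\,e^{2\beta s/p}\,p_{t-s}(y-x)^2$.

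Assembling these pieces, I obtain
\begin{equation}
	\E\bigl(|(\sA f)(t,x)|^p\bigr)
	\le \Bigl(z_p\bigl(|\sigma(0)|+\lip_\sigma\|f\|_{p,\beta}\bigr)\Bigr)^p
	\left(\int_0^t e^{2\beta s/p}\int_{-\infty}^\infty p_{t-s}(y-x)^2\,\d y\,\d s\right)^{p/2}.
\end{equation}
The inner space integral is $\int_{\R} p_{t-s}(y-x)^2\,\d y=\|p_{t-s}\|_{L^2(\R)}^2$ by translation invariance, so after the substitution $r=t-s$ the time integral becomes $\int_0^t e^{2\beta(t-r)/p}\|p_r\|_{L^2(\R)}^2\,\d r=e^{2\beta t/p}\int_0^t e^{-2\beta r/p}\|p_r\|_{L^2(\R)}^2\,\d r$. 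By Lemma \ref{lem:monotone} applied with parameter $2\beta/p$ in place of $\beta$, this is at most $e^{2\beta t/p}\,\Upsilon(2\beta/p)$. Multiplying through by $e^{-\beta t}$, taking the supremum over $t\ge 0$ and $x\in\R$, and raising to the power $1/p$ yields exactly the claimed inequality. The only real subtlety — and the step I would treat most carefully — is ensuring that the sharp constant $z_p$ in BDG for Walsh-type martingales is correctly quoted (rather than a generic $p$-dependent constant) and that the reduction of the $L^{p/2}$-norm of the quadratic variation to a purely deterministic integral via Minkowski is carried out cleanly; the remaining manipulations are routine Fubini/Plancherel bookkeeping.
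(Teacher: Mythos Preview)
Your proof is correct and follows the same overall strategy as the paper: apply Davis's sharp BDG inequality with constant $z_p$, reduce the $L^{p/2}(\P)$-norm of the quadratic variation to a deterministic space--time integral, insert the linear-growth bound \eqref{eq:sigma:linear}, and close with Lemma~\ref{lem:monotone}.

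There are two execution differences worth noting. First, to pass from the $L^{p/2}(\P)$-norm of the quadratic variation to the integral of $\|\sigma(f(s,y))\|_{L^p(\P)}^2$, the paper expands the $(p/2)$th power as a product (using that $p/2$ is a positive integer) and applies the generalized H\"older inequality, whereas you use Minkowski's integral inequality directly; the two routes yield the identical pointwise bound. Second, and more interestingly, the paper handles the term $(|\sigma(0)|+\lip_\sigma\|f(s,y)\|_{L^p(\P)})^2$ by invoking Lemma~\ref{lem:arith} with a free parameter $\epsilon$, carrying $\epsilon$ through the computation, and only at the very end optimizing $\epsilon$ to recover $(|\sigma(0)|+\lip_\sigma\|f\|_{p,\beta})^2$. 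Your one-line observation $|\sigma(0)|+\lip_\sigma e^{\beta s/p}\|f\|_{p,\beta}\le (|\sigma(0)|+\lip_\sigma\|f\|_{p,\beta})\,e^{\beta s/p}$ bypasses Lemma~\ref{lem:arith} entirely and reaches the same final constant more directly. So your argument is a modest streamlining of the paper's, not a different method.
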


\begin{proof}
	In his seminal 1976 paper \cite{Davis}, Burgess Davis found the
	optimal constants in the Burkholder--Davis--Gundy [BDG] inequality.
	In particular, Davis proved that for all $t\ge 0$ and $p\ge 2$,
	\begin{equation}
		z_p = \sup\left\{
		\frac{\|N_t\|_{L^p(\P)}}{\|
		\langle N\,,N\rangle_t\|_{L^{p/2}(\P)}^2}:
		\, N\in\mathfrak{M}_p
		\right\},
	\end{equation}
	where $0/0:=0$ and $\mathfrak{M}_p$ denotes the collection of all
	continuous $L^p(\P)$-martingales.
	We apply Davis's form of the BDG inequality 
	[\emph{loc.\ cit.}], and find that
	\begin{equation}\begin{split}
		&\|( \sA f)(t\,,x)\|_{L^p(\P)}^p\\
		&\hskip.8in\le z_p^p\E\left(\left|
			\int_{-\infty}^\infty\d y\int_0^t\d s\,
			\left| \sigma(f(s\,,y))\right|^2
			\left|p_{t-s}(y-x)\right|^2
			\right|^{p/2} \right).
	\end{split}\end{equation}
	Since $p/ 2$ is a positive integer, the preceding expectation can
	be written as
	\begin{equation}
		\E\left(\prod_{j=1}^{p/2} \int_{-\infty}^\infty\d y_j
		\int_0^t\d s_j\,
		\left| \sigma(f(s_j\,,y_j))\right|^2
		\left|p_{t-s_j}(y_j-x)\right|^2 \right).
	\end{equation}
	The generalized H\"older inequality tells us that
	\begin{equation}\label{eq:Holder}
		\E\left(\prod_{j=1}^{p/2}\left| \sigma(f(s_j\,,y_j))\right|^2
		\right)\le \prod_{j=1}^{p/2}
		\left\| \sigma(f(s_j\,,y_j)) \right\|_{L^p(\P)}^2.
	\end{equation}
	Therefore, a little algebra shows us that
	\begin{equation}
		\|( \sA f)(t\,,x)\|_{L^p(\P)}^2
		\le z_p^2 \int_{-\infty}^\infty\d y
		\int_0^t\d s\,
		\left\| \sigma(f(s\,,y))\right\|_{L^p(\P)}^2
		\left|p_{t-s}(y-x)\right|^2.
	\end{equation}
	Owing to \eqref{eq:sigma:linear} and Minkowski's inequality,
	\begin{equation}\begin{split}
		&\|( \sA f)(t\,,x)\|_{L^p(\P)}^2\\
		&\hskip.4in\le z_p^2 \int_{-\infty}^\infty\d y
			\int_0^t\d s\,
			\left( c_0+c_1\| f(s\,,y)\|_{L^p(\P)}\right)^2
			\left|p_{t-s}(y-x)\right|^2,
	\end{split}\end{equation}
	where $c_0:=|\sigma(0)|$ and $c_1:=\lip_\sigma$,
	for brevity.
	Therefore, Lemmas \ref{lem:monotone}
	and \ref{lem:arith} together imply the following bound,
	valid for all $\beta>0$:
	\begin{align}
		\|( \sA f)(t\,,x)\|_{L^p(\P)}^2
			&\le \left(1+\epsilon^{-1}\right)z_p^2c_0^2
			e^{2\beta t/p}\Upsilon\left(\frac{2\beta}{p}\right)\\\nonumber
		&\quad+(1+\epsilon)z_p^2c_1^2
			\int_{-\infty}^\infty\d y
			\int_0^t\d s\,\|f(s\,,y)\|_{L^p(\P)}^2
			\left| p_{t-s}(y-x)\right|^2.
	\end{align}
	Because $\|f(s\,,y)\|_{L^p(\P)}^2 \le
	\exp(2\beta s/p)\|f\|_{p,\beta}^2$,
	it follows that
	\begin{equation}\begin{split}
		&\|( \sA f)(t\,,x)\|_{L^p(\P)}^2\\
		&\ \le \left(1+\epsilon^{-1}\right)z_p^2c_0^2
			e^{2\beta t/p}\Upsilon\left(\frac{2\beta}{p}\right)\\
		&\hskip.5in+(1+\epsilon)z_p^2c_1^2e^{2\beta t/p}\|f\|_{p,\beta}^2
			\int_{-\infty}^\infty\d y\int_0^t\d s\,
			e^{-2\beta s/p}\left| p_s(y-x)\right|^2\\
		&\ \le \left(1+\epsilon^{-1}\right)z_p^2c_0^2
			e^{2\beta t/p}\Upsilon\left(\frac{2\beta}{p}\right)
			+(1+\epsilon)z_p^2c_1^2e^{2\beta t/p}\|f\|_{p,\beta}^2
			\Upsilon\left(\frac{2\beta}{p}\right).
	\end{split}\end{equation}
	See Lemma \ref{lem:monotone} for the final inequality.
	We multiply both sides by
	$\exp(-2\beta t/p)$ and optimize over $t\ge 0$
	and $x\in\R$ to deduce the estimate
	\begin{equation}
		\|\sA f\|_{p,\beta}^2
		\le z_p^2\left\{\left(1+\epsilon^{-1}\right)|\sigma(0)|^2
		+(1+\epsilon) \lip_\sigma^2 \|f\|_{p,\beta}^2
		\right\}\Upsilon\left(\frac{2\beta}{p}\right).
	\end{equation}
	The preceding is valid for all $\epsilon>0$. Now we choose
	\begin{equation}
		\epsilon:=
		\begin{cases}
			|\sigma(0)|/(\lip_\sigma\|f\|_{p,\beta})&\text{if $
				|\sigma(0)|\cdot
				\|f\|_{p,\beta}>0$},\\
			0&\text{if $\sigma(0)=0$},\\
			\infty&\text{if $\|f\|_{p,\beta}=0$},
		\end{cases}
	\end{equation}
	to arrive at the statement of the lemma. Of course,
	``$\epsilon=\infty$'' means ``send $\epsilon\to\infty$''
	in the preceding.
\end{proof}

We plan to carry out a fixed-point argument in order
to prove Theorem \ref{th:exist}. The following result
shows that the stochastic-integral operator
$f\mapsto \sA f$ is a contraction on
suitably-chosen spaces.

\begin{lemma}\label{lem:A2}
	Choose and fix an even integer $p\ge 2$.
	For every $\beta>0$, and all predictable random fields
	$f$ and $g$ that satisfy $\|f\|_{p,\beta}+\|g\|_{p,\beta}<\infty$,
	\begin{equation}
		\| \sA f-\sA g \|_{p,\beta}
		\le z_p\lip_\sigma \sqrt{\Upsilon\left(\frac{2\beta}{p}\right)}
		\| f-g\|_{p,\beta}.
	\end{equation}
\end{lemma}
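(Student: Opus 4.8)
The plan is to follow the same pattern as the proof of Lemma~\ref{lem:A1}, but replace the use of the linear-growth bound \eqref{eq:sigma:linear} on $\sigma$ by its Lipschitz property. First I would observe that $\sA f - \sA g$ is again a Walsh stochastic integral, namely
\begin{equation*}
	(\sA f - \sA g)(t\,,x) = \int_{-\infty}^\infty\int_0^t
	\bigl(\sigma(f(s\,,y))-\sigma(g(s\,,y))\bigr)\,p_{t-s}(y-x)\,w(\d s\,\d y),
\end{equation*}
so Davis's optimal-constant form of the BDG inequality applies verbatim and yields
\begin{equation*}
	\|(\sA f-\sA g)(t\,,x)\|_{L^p(\P)}^p \le z_p^p\,
	\E\!\left(\left| \int_{-\infty}^\infty\!\d y\int_0^t\!\d s\,
	\bigl|\sigma(f(s\,,y))-\sigma(g(s\,,y))\bigr|^2\,|p_{t-s}(y-x)|^2\right|^{p/2}\right).
\end{equation*}

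Next, since $p/2$ is a positive integer, I would expand the $(p/2)$-th power as a $(p/2)$-fold product of integrals over independent dummy variables $(s_j\,,y_j)$, exactly as in Lemma~\ref{lem:A1}, and apply the generalized H\"older inequality to bound $\E\bigl(\prod_{j=1}^{p/2}|\sigma(f(s_j\,,y_j))-\sigma(g(s_j\,,y_j))|^2\bigr)$ by $\prod_{j=1}^{p/2}\|\sigma(f(s_j\,,y_j))-\sigma(g(s_j\,,y_j))\|_{L^p(\P)}^2$. Re-assembling the product back into a single integral gives
\begin{equation*}
	\|(\sA f-\sA g)(t\,,x)\|_{L^p(\P)}^2 \le z_p^2
	\int_{-\infty}^\infty\!\d y\int_0^t\!\d s\,
	\|\sigma(f(s\,,y))-\sigma(g(s\,,y))\|_{L^p(\P)}^2\,|p_{t-s}(y-x)|^2.
\end{equation*}
Now the Lipschitz bound $|\sigma(f(s\,,y))-\sigma(g(s\,,y))|\le \lip_\sigma|f(s\,,y)-g(s\,,y)|$ gives $\|\sigma(f(s\,,y))-\sigma(g(s\,,y))\|_{L^p(\P)}^2 \le \lip_\sigma^2\|(f-g)(s\,,y)\|_{L^p(\P)}^2 \le \lip_\sigma^2 e^{2\beta s/p}\|f-g\|_{p,\beta}^2$ by definition of the norm. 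Substituting this in and pulling out the constant, then using Lemma~\ref{lem:monotone} (in the form $\int_0^\infty e^{-2\beta s/p}\|p_s\|_{L^2(\R)}^2\,\d s = \Upsilon(2\beta/p)$) to control the resulting space-time integral, I obtain
\begin{equation*}
	e^{-2\beta t/p}\|(\sA f-\sA g)(t\,,x)\|_{L^p(\P)}^2
	\le z_p^2\lip_\sigma^2\,\Upsilon\!\left(\frac{2\beta}{p}\right)\|f-g\|_{p,\beta}^2.
\end{equation*}
Taking the supremum over $t\ge 0$ and $x\in\R$ and then square roots yields the claimed inequality.

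There is no real obstacle here: the argument is strictly easier than that of Lemma~\ref{lem:A1} because the difference $\sigma(f)-\sigma(g)$ has no additive constant term, so neither Lemma~\ref{lem:arith} nor the optimization over $\epsilon$ is needed. The only points requiring a modicum of care are that $p$ is even so that $p/2$ is a genuine integer (needed both for the product expansion and for Davis's inequality as stated), and that the finiteness hypothesis $\|f\|_{p,\beta}+\|g\|_{p,\beta}<\infty$ guarantees all the integrals and expectations above are finite, so that Fubini--Tonelli and the manipulations are legitimate. The most delicate step is the interchange of expectation and the multiple space-time integral when expanding the $(p/2)$-th power, but this is justified exactly as in Lemma~\ref{lem:A1} by nonnegativity of the integrand and Tonelli's theorem.
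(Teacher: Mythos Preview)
Your proof is correct and follows essentially the same route as the paper's: apply Davis's optimal BDG, expand the $(p/2)$-th power as a product, use the generalized H\"older inequality, invoke the Lipschitz property of $\sigma$, insert the $\|\cdot\|_{p,\beta}$ bound, and finish with Lemma~\ref{lem:monotone}. The only cosmetic difference is that the paper applies the Lipschitz bound before expanding the product rather than after the H\"older step.
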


\begin{proof}
	The proof is a variant of the preceding argument.
	Namely,
	\begin{align} \nonumber
		&\E\left(\left| (\sA f)(t\,,x)-(\sA g)(t\,,x)\right|^p\right)\\
		&\le z_p^p \E\left(\left| \int_{-\infty}^\infty\d y
			\int_0^t\d s\,
			\left| \sigma(f(s\,,y))-\sigma(g(s\,,y))\right|^2
			\left| p_{t-s}(y-x)\right|^2 \right|^{p/2}\right)\\\nonumber
		&\le \left(z_p\lip_\sigma\right)^p
			\E\left(\left| \int_{-\infty}^\infty\d y\int_0^t\d s\,
			\left| f(s\,,y) - g(s\,,y) \right|^2
			\left| p_{t-s}(y-x)\right|^2 \right|^{p/2}\right).
	\end{align}
	We write the expectation as
	\begin{equation}
		\E\left(\prod_{j=1}^{p/2} \int_{-\infty}^\infty\d y_j
		\int_0^t\d s_j\,
		\left| f(s_j\,,y_j) - g(s_j\,,y_j) \right|^2
		\left| p_{t-s_j}(y_j-x)\right|^2  \right),
	\end{equation}
	and apply \eqref{eq:Holder} to obtain the bound
	\begin{align}
		&\E\left(\left| (\sA f)(t\,,x)-(\sA g)(t\,,x)\right|^p\right)\\ \nonumber
		&\le \left(z_p\lip_\sigma\right)^p
			\left(\int_{-\infty}^\infty\d y\int_0^t\d s\,
			\| f(s\,,y) - g(s\,,y) \|_{L^p(\P)}^2
			\left| p_{t-s}(y-x)\right|^2 \right)^{p/2}\\\nonumber
		&\le \left(z_p\lip_\sigma\right)^p
			\|f-g\|_{p,\beta}^p e^{\beta t}
			\left(\int_{-\infty}^\infty\d y\int_0^t\d s\, 
			e^{-2\beta s/p}
			\left| p_s(y-x)\right|^2 \right)^{p/2}.
	\end{align}
	This has the desired effect; see Lemma \ref{lem:monotone}.
\end{proof}

\section{Proofs of the main results}\label{sec:proofs}

\begin{proof}[Proof of Theorem \ref{th:exist}]
	Define
	$v_0(t\,,x):=u_0(x)$ for all $(t\,,x)\in\R_+\times\R$.
	Since $u_0$ is assumed to be bounded, 
	$\|v_0\|_{p,\beta}<\infty$ for all $\beta>0$
	and all even integers $p\ge 2$. Now we
	iteratively set
	\begin{equation}
		v_{n+1}(t\,,x) := (\sA v_n)(t\,,x) + (\sG u_0)(t\,,x)
		\qquad\text{for all $n\ge 0$}.
	\end{equation}

	If we set $\sA v_{-1}:=v_0$, then thanks to Lemma \ref{lem:A1},
	for all $n\ge -1$,
	\begin{equation}\label{eq:Av:v}
		\|\sA v_{n+1}\|_{p,\beta} \le z_p
		\left( |\sigma(0)|+\lip_\sigma \|\sA v_n\|_{p,\beta}\right)
		\sqrt{\Upsilon\left(\frac{2\beta}{p}\right)}.
	\end{equation}
	Since $\lim_{\beta\to\infty}\Upsilon(\beta)=0$,
	we can always choose and fix $\beta>0$ such that
	\begin{equation}\label{good:beta}
		z_p^2\lip_\sigma^2\Upsilon\left(\frac{2\beta}{p}\right)<1.
	\end{equation}
	Given such a $\beta$ we find, after a few lines
	of computation, that
	\begin{equation}
		\sup_{n\ge 0}\|\sA v_n\|_{p,\beta} \le \frac{z_p|\sigma(0)|
		\sqrt{ \Upsilon(2\beta/p)} }{%
		1- z_p\lip_\sigma\sqrt{\Upsilon(2\beta/p)}
		}.
	\end{equation}
	Because $\sG u_0$ is bounded uniformly by $\sup_{z\in\R}u_0(z)$,
	the preceding yields
	\begin{equation}\label{eq:v:sup}
		\sup_{k\ge 1}\| v_k\|_{p,\beta} \le\frac{z_p|\sigma(0)|
		\sqrt{ \Upsilon(2\beta/p)} }{%
		1- z_p\lip_\sigma\sqrt{\Upsilon(2\beta/p)}
		} + \sup_{z\in\R}|u_0(z)|,
	\end{equation}
	which is finite.
	Consequently, Lemma \ref{lem:A2}
	assures us that all $n\ge 1$,
	\begin{equation}\begin{split}
		\|v_{n+1}-v_n\|_{p,\beta} &=
			\|\sA v_n -\sA v_{n-1}\|_{p,\beta}\\
		&\le z_p\lip_\sigma \sqrt{%
			\Upsilon\left(\frac{2\beta}{p}\right)}
			\|v_n-v_{n-1}\|_{p,\beta}.
	\end{split}\end{equation}
	Because of \eqref{good:beta}, this proves 
	the existence of a predictable random field $u$ such that
	$\lim_{n\to\infty} \| v_n-u\|_{p,\beta}=
	\lim_{n\to\infty}\| \sA v_n-\sA u\|_{p,\beta}=0$.
	Consequently, $\|u\|_{p,\beta}<\infty$,
	$\| u - \sA u- \sG u_0\|_{p,\beta} =0$, and 	
	\begin{equation}
		\E\left(\left| u(t\,,x) - (\sA u)(t\,,x)
		-(\sG u_0)(t\,,x) \right|^p\right)=0
		\quad\text{for all $(t\,,x)\in\R_+\times\R$}.
	\end{equation}
	These remarks prove all but one of
	the assertions of the theorem;
	we still need to establish that $u$ is unique up to a modification.
	For that we follow the methods of
	Da Prato \cite{Da}, Da Prato and Zabczyk \cite{DZ}, and
	especially Peszat and Zabczyk \cite{PZ}: Suppose there
	are two solutions $u$ and $\bar{u}$ to \eqref{heat}.
	Define for all predictable random fields $f$, and $T>0$,
	\begin{equation}
		\| f\|_{2,\beta,T}:=\left\{
		\sup_{t\in[0,T]}\sup_{x\in\R} e^{-\beta t}\E\left(
		\left| f(t\,,x) \right|^2\right)
		\right\}^{1/2}.
	\end{equation}
	Then, we can easily modify the proof of 
	Lemma \ref{lem:A2}, using also the fact that $z_2=1$
	[Remark \ref{rem:He}], to deduce that if $u$ and
	$\bar u$ are two solutions to \eqref{heat}, then
	the following holds for all $T>0$:
	\begin{equation}\begin{split}
		\left\| u-\bar{u}\right\|_{2,\beta,T}
			&=\left\| \sA u - \sA \bar u\right\|_{2,\beta,T}\\
		&\le \lip_\sigma
			\sqrt{\Upsilon (\beta)}
			\left\| u-\bar{u}\right\|_{2,\beta,T}.
	\end{split}\end{equation}
	Because $\Upsilon(\beta)$ vanishes as $\beta$ tends to infinity,
	this proves that $\|u-\bar u\|_{2,\beta,T}=0$ for all $T>0$ and all
	sufficiently large $\beta>0$. This implies that
	$u$ and $\bar u$ are modifications of one another.
\end{proof}

\begin{proof}[Proof of Proposition \ref{pr:exist1}]
	Because $c_4:=\sup_{x\in\R}(|\sigma(x)|\vee |u_0(x)|)
	<\infty$,
	the Burkholder--Davis--Gundy implies that
	\begin{equation}\label{eq:BE2}
		\|u(t\,,x)\|_{L^p(\P)} \le c_4+c_4z_p \left(
		\int_0^t \|p_s\|_{L^2(\R)}^2
		\,\d s  \right)^{1/2}.
	\end{equation}
	Therefore, it suffices to prove that
	\begin{equation}\label{eq:goal:BE2}
		\int_0^t \|p_s\|_{L^2(\R)}^2\,\d s=o(t)\quad
		\text{as $t\to\infty$.}
	\end{equation}
	The left-most term is equal
	to $t\int_0^1 \| p_{s t}\|_{L^2(\R)}^2\,\d s$.
	According to \eqref{eq:L2p}, the map
	$s\mapsto\|p_s\|_{L^2(\R)}^2$
	is nonincreasing, and $\lim_{s\to\infty} \|p_s\|_{L^2(\R)}=0$
	by the dominated convergence theorem. Therefore,
	a second appeal to
	the dominated convergence theorem yields \eqref{eq:goal:BE2}
	and hence the theorem.
\end{proof}

\begin{proof}[Proof of Theorem \ref{th:behavior}]
	We aim to prove that
	\begin{equation}\label{goal:BE}
		\int_0^\infty e^{-\beta t} \E\left(\left|
		u(t\,,x)\right|^2\right)\d t=\infty\quad
		\text{provided that $\Upsilon(\beta)\ge q^{-2}$}.
	\end{equation}
	This implies \eqref{eq:BE}, as the following argument shows:
	Suppose, to the contrary, that 
	$\E(|u(t\,,x)|^2)=O(\exp(\alpha t))$ as
	$t\to\infty$, where
	$\Upsilon(\alpha) > q^{-2}$ and $x\in\R$. 
	It follows from this that
	\begin{equation}
		\int_0^\infty e^{-\beta t} \E\left(\left|
		u(t\,,x)\right|^2\right)\d t \le
		\text{const}\cdot\int_0^\infty e^{-(\beta-\alpha)t}\,\d t,
	\end{equation}
	and this is finite
	for every $\beta\in(\alpha\,,\Upsilon^{-1}(q^{-2})).$
	Our finding contradicts \eqref{goal:BE}, and thence follows
	\eqref{eq:BE}. It remains to establish \eqref{goal:BE}.
	
	Let us introduce the following notation:
	\begin{equation}\begin{split}
		F_\beta(x) &:= \int_0^\infty e^{-\beta t}\E
			\left(\left|u(t\,,x)\right|^2\right)\d t\\
		G_\beta(x) &:=\int_0^\infty e^{-\beta t}
			\left|(\breve{p}_t*u_0)(x)\right|^2\d t\\
		H_\beta(x) &:=\int_0^\infty e^{-\beta t}
			\left| p_t(x)\right|^2\d t.
	\end{split}\end{equation}
	
	Because
	\begin{equation}\label{b/c}\begin{split}
		&\E\left(\left| u(t\,,x)\right|^2\right)\\
		&\qquad= \left| \left(\breve{p}_t*u_0\right)(x)\right|^2
			+\int_{-\infty}^\infty\d y
			\int_0^t\d s\,\E\left(\left|\sigma(u(s\,,y))
			\right|^2\right)\left| p_{t-s}(y-x)\right|^2,
	\end{split}\end{equation}
	we may apply Laplace transforms to both sides,
	and then deduce that for all $\beta>0$ and $x\in\R$,
	\begin{equation}\label{eq:BE3}
		F_\beta(x) = G_\beta(x) + \int_{-\infty}^\infty \d y\,
		H_\beta(x-y)
		\int_0^\infty\d s\, e^{-\beta s}\E\left(\left|\sigma(u(s\,,y))
		\right|^2\right).
	\end{equation}
	Because $|\sigma(z)|^2\ge q^2 |z|^2$ for all $z\in\R$,
	we are led to the following:
	\begin{equation}
		F_\beta(x) \ge G_\beta(x) +q^2(F_\beta*H_\beta)(x).
	\end{equation}
	This is a ``renewal inequation,'' and can be solved by
	standard methods. We will spell that argument out carefully,
	since we need an enhanced version shortly:
	If we define the linear operator $\sH$ by
	\begin{equation}
		(\sH f)(x):=q^2\left( H_\beta *f\right)(x),
	\end{equation}
	then we can deduce that
	$\sH^n F_\beta -\sH^{n+1} F_\beta
	\ge \sH^nG_\beta$, pointwise,
	for all integers $n\ge 0$.
	We sum this inequality from $n=0$ to $n=N$
	and find that
	\begin{equation}\begin{split}
		F_\beta(x) &\ge \left( \sH^{N+1}F_\beta \right)(x)+\sum_{n=0}^N
			\left( \sH^nG_\beta \right)(x)\\
		&\ge \sum_{n=0}^N \left( \sH^nG_\beta \right)(x).
	\end{split}\end{equation}
	It follows, upon letting $N$ tend to infinity, that
	\begin{equation}
		F_\beta(x) \ge \sum_{n=0}^\infty 
		( \sH^nG_\beta)(x).
	\end{equation}
	If $\eta:=\inf_x u_0(x)$, then  $(\breve{p}_t*u_0)(x)\ge
	\eta$ pointwise, and hence
	$G_\beta(x)\ge\eta^2/\beta$. Consequently, 
	\begin{equation}\begin{split}
		(\sH G_\beta)(x) &\ge \frac{q^2\eta^2}{\beta}
			\cdot \int_{-\infty}^\infty
			H_\beta(x)\, \d x\\
		&= \frac{q^2\eta^2}{\beta}
			\cdot \Upsilon(\beta);
	\end{split}\end{equation}
	consult Lemma \ref{lem:monotone} for the identity.
	We can iterate the preceding argument to deduce that
	$F_\beta(x) \ge \eta^2\beta^{-1}\sum_{n=0}^\infty
	(q^2 \Upsilon(\beta))^n$,
	whence $F_\beta(x)=\infty$ as long as
	$\Upsilon(\beta)\geq q^{-2}$.  This verifies
	\eqref{goal:BE}, and concludes our proof.
\end{proof}

\begin{proof}[Proof of Proposition \ref{pr:trans}]
	We recall the well-known fact that 
	\begin{equation}\label{Xbar:rec}
		\text{$\bar{X}$ is
		recurrent if and only if}\quad
		\int_{-1}^1 \frac{\d\xi}{\Re\Psi(\xi)}=\infty.
	\end{equation}
	Otherwise, $\bar{X}$ is transient; see Exercise V.6
	of Bertoin \cite[p.\ 152]{Bertoin}.
	Because $\Upsilon(\beta)<\infty$ for all
	$\beta>0$, and since $\Re\Psi(\xi)\ge 0$,
	it is manifest that \eqref{Xbar:rec} is
	equivalent to the following:
	\begin{equation}\label{Xbar:rec1}
		\text{$\bar{X}$ is
		recurrent if and only if}\quad
		\lim_{\beta\downarrow 0}\Upsilon(\beta)=
		\infty.
	\end{equation}
	
	Consequently, when $\bar{X}$ is transient,
	$\sup_{\beta>0}\Upsilon(\beta)=
	\lim_{\beta\downarrow 0}\Upsilon(\beta)<\infty$,
	and the proposition follows immediately from
	Theorem \ref{th:exist}. In fact, we can
	choose $\delta(p)$ to be the reciprocal
	of $z_p \{\sup_{\beta>0}\Upsilon(\beta)\}^{1/2}.$
\end{proof}

\begin{proof}[Proof of Corollary \ref{cor:L:Anderson}]
	Thanks to \eqref{Xbar:rec}, when $\bar{X}$ is recurrent, 
	we can find $\beta>0$ such that $\Upsilon(\beta)>1/\lambda^2$.
	Theorem \ref{th:behavior} implies the exponential
	growth of $u$, and the formula
	for $\bar\gamma(2)$ follows
	upon combining the quantitative bounds of
	Theorems \ref{th:exist} and \ref{th:behavior}.
	The case where $\bar{X}$ is transient is proved similarly.
\end{proof}

We close the paper with the following.

\begin{proof}[Proof of Theorem \ref{th:sublinear}]
	We modify the proof of Theorem \ref{th:behavior}, and point out
	only the requisite changes.
	First of all, let us note that
	for all $q_0\in(0\,,q)$ there exists $A=A(q_0)\in[0\,,\infty)$
	such that $|\sigma(z)|\ge q_0|z|$
	provided that $|z|>A$.
	Consequently, for all $s\in\R_+$ and $y\in\R$,
	\begin{equation}\begin{split}
		\E\left(\left| \sigma(u(s\,,y))\right|^2\right)
			&\ge q_0^2\E\left(\left| u(s\,,y)\right|^2;\,
			|u(s\,,y)| >A\right)\\
		&\ge q_0^2\E\left(\left| u(s\,,y)\right|^2\right)-
			q_0^2A^2.
	\end{split}\end{equation}
	Eq.\ \eqref{b/c} implies that $\E(|u(t\,,x)|^2)$ is bounded below by
	\begin{equation}\begin{split}
		&\left|(\breve{p}_t*u_0)(x)\right|^2+
			q_0^2\int_{-\infty}^\infty\d y
			\int_0^t\d s\,
			\E\left(\left|u(s\,,y)\right|^2\right)
			\left| p_{t-s}(y-x)\right|^2\\
		&\hskip2.5in
			-q_0^2A^2 \int_0^t \left\| p_s\right\|^2_{L^2(\R)}\,\d s.
	\end{split}\end{equation}
	We multiply both sides of
	the preceding display
	by $\exp(-\beta t)$, for a fixed $\beta>0$,
	and integrate $[\d t]$ to find that
	\begin{equation}
		F_\beta(x) \ge G_\beta(x) + (\sH F_\beta)(x)
		- \frac{q_0^2A^2}{\beta}\,\Upsilon(\beta),
	\end{equation}
	where the notation is borrowed from the proof
	of Theorem \ref{th:behavior}. We apply $\sH^n$ to both sides
	to deduce the following: For all integers $n\ge 0$ and
	$x\in\R$,
	\begin{equation}\begin{split}
		(\sH^n F_\beta)(x) &\ge (\sH^n G_\beta)(x) +
			(\sH^{n+1} F_\beta)(x) -
			\frac{q_0^2A^2}{\beta}\,\Upsilon(\beta)\cdot
			\left| q_0^2\Upsilon(\beta)\right|^n\\
		&\ge \frac{\eta^2}{\beta}\cdot
			\left| q_0^2\Upsilon(\beta)\right|^n +
			(\sH^{n+1} F_\beta)(x) -
			\frac{A^2}{\beta}\cdot
			\left| q_0^2\Upsilon(\beta)\right|^{n+1},
	\end{split}\end{equation}
	thanks to the tautological bound $u_0\ge\eta$. We collect terms
	to obtain the following key estimate for the present proof: 
	\begin{equation}
		(\sH^n F_\beta)(x) - (\sH^{n+1}F_\beta)(x) \ge
		\frac{\eta^2  - A^2 q_0^2\Upsilon(\beta)}{\beta}
		\times\left| q_0^2\Upsilon(\beta)\right|^n,
	\end{equation}
	valid for all integers $n\ge 0$ and $x\in\R$.	
	Because $\bar{X}$
	is recurrent, \eqref{Xbar:rec1} ensures that
	we can choose $\beta>0$ sufficiently small that $q_0\Upsilon(\beta)>1$.
	Consequently, $F_\beta(x) \equiv \infty$
	as long as $\eta$ is greater than $Aq_0\Upsilon(\beta)$.
	This proves the theorem;
	confer with the paragraph immediately
	following \eqref{goal:BE}.
\end{proof}
\appendix
\section{Regularity}\label{sec:reg}

The goal of this section is to show that one can produce
a nice modification of the solution to \eqref{heat}.

\begin{theorem}\label{th:reg}
	If $u_0$ is continuous, then the
	solution to \eqref{heat} is continuous in $L^p(\P)$
	for all $p>0$.
	Consequently, $u$ has a separable modification.
	If, in addition, $u_0$ is uniformly continuous,
	then for all $T,p>0$,
	\begin{equation}
		\lim_{\delta,\rho\downarrow 0}\, \sup_{0\le t\le T}\,
		\sup_{x\in\R} \left\| u(t\,,x) - u(s\,,y)
		\right\|_{L^p(\P)}=0.
	\end{equation}
\end{theorem}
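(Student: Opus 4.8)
The plan is to work from the mild formulation \eqref{mild} and decompose the increment as
\[
u(t\,,x)-u(s\,,y)=\bigl[(\sG u_0)(t\,,x)-(\sG u_0)(s\,,y)\bigr]+\bigl[(\sA u)(t\,,x)-(\sA u)(s\,,y)\bigr],
\]
so that the deterministic drift term $\sG u_0$ and the stochastic-convolution term $\sA u$ can be handled separately. It suffices to prove the desired $L^p(\P)$-bounds when $p\ge 2$ is an even integer: for $0<p<2$ one uses $\|\,\cdot\,\|_{L^p(\P)}\le\|\,\cdot\,\|_{L^2(\P)}$, and for a non-integer $p>2$ one compares with the next larger even integer. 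Finally, $L^2(\P)$-continuity yields continuity in probability, and a stochastically continuous process on $\R_+\times\R$ admits a separable modification by a standard result; this disposes of the second assertion once the first is established.

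For the drift term I would use the representation $(\sG u_0)(t\,,x)=(\sP_tu_0)(x)=\E[u_0(x-X_t)]$. Since $u_0$ is bounded and, in the first part, continuous, while $X$ is continuous in probability, bounded convergence shows that $(t\,,x)\mapsto(\sG u_0)(t\,,x)$ is continuous on $\R_+\times\R$; being nonrandom, it is then continuous in $L^p(\P)$ for every $p>0$. When $u_0$ is moreover uniformly continuous, let $\omega$ be its (bounded, nondecreasing) modulus of continuity; using the stationary independent increments of $X$ one obtains, for $0\le s\le t$,
\[
\bigl|(\sG u_0)(t\,,x)-(\sG u_0)(s\,,y)\bigr|\le\omega(|x-y|)+\E\bigl[\omega(|X_{t-s}|)\bigr],
\]
and the right-hand side tends to $0$ as $|x-y|\to 0$ and $t-s\to 0$, uniformly over $s,t\in[0\,,T]$ and $x,y\in\R$, since $X_{t-s}\to 0$ in probability and $\omega$ is bounded with $\omega(0^+)=0$.

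For the stochastic term I would apply Davis's form of the BDG inequality together with the generalized H\"older inequality exactly as in the proof of Lemma \ref{lem:A1}; since the kernel $\psi(r\,,z):=p_{t-r}(z-x)\1_{[0,t]}(r)-p_{s-r}(z-y)\1_{[0,s]}(r)$ is nonrandom, this gives
\[
\bigl\|(\sA u)(t\,,x)-(\sA u)(s\,,y)\bigr\|_{L^p(\P)}^2\le z_p^2\int_{-\infty}^\infty\!\!\int_0^\infty\bigl\|\sigma(u(r\,,z))\bigr\|_{L^p(\P)}^2\,|\psi(r\,,z)|^2\,\d r\,\d z.
\]
By \eqref{eq:sigma:linear} and the bound $\|u\|_{p,\beta}<\infty$ from Theorem \ref{th:exist}, the factor $\|\sigma(u(r\,,z))\|_{L^p(\P)}$ is bounded on $r\in[0\,,T]$ by a finite constant $M_T$, so (taking $s\le t\le T$ without loss of generality) everything reduces to showing that $\int_{-\infty}^\infty\int_0^\infty|\psi(r\,,z)|^2\,\d r\,\d z\to 0$ as $t-s\to 0$ and $x-y\to 0$, uniformly over $s,t\in[0\,,T]$ and $x,y\in\R$. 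Splitting the $r$-integral at $r=s$, the part over $r\in(s\,,t]$ equals $\int_0^{t-s}\|p_u\|_{L^2(\R)}^2\,\d u$, which tends to $0$ because $u\mapsto\|p_u\|_{L^2(\R)}^2$ is integrable near the origin ($\int_0^1\|p_u\|_{L^2(\R)}^2\,\d u\le e^\beta\Upsilon(\beta)<\infty$ by Lemma \ref{lem:monotone}). For the part over $r\in[0\,,s]$, Plancherel's theorem, the identity \eqref{eq:L2p}, and translation invariance give, after the substitution $a=s-r$, a bound of the form
\[
\frac{1}{2\pi}\int_{-\infty}^\infty\Bigl(\int_0^T e^{-2a\Re\Psi(\xi)}\,\d a\Bigr)\,\bigl|e^{-(t-s)\Psi(\xi)}-e^{i(x-y)\xi}\bigr|^2\,\d\xi,
\]
in which the inner integral is at most a $T$-dependent constant times $(1+2\Re\Psi(\xi))^{-1}$ --- an integrable function precisely because $\Upsilon(1)<\infty$ --- while $\bigl|e^{-(t-s)\Psi(\xi)}-e^{i(x-y)\xi}\bigr|^2\le 4$ and its supremum over $t-s\le\delta$, $|x-y|\le\rho$ decreases to $0$ pointwise in $\xi$ as $\delta,\rho\downarrow 0$; dominated convergence then delivers the required uniform smallness. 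Combining the three estimates proves both the pointwise $L^p(\P)$-continuity and, under uniform continuity of $u_0$, the uniform statement.

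The step I expect to be the main obstacle is the last one: producing a dominating function for the $r\in[0\,,s]$ piece that is simultaneously integrable and valid uniformly for all $s,t\in[0\,,T]$. The naive choice $\int_0^\infty e^{-2a\Re\Psi(\xi)}\,\d a=(2\Re\Psi(\xi))^{-1}$ fails, being non-integrable near $\xi=0$ where $\Re\Psi$ vanishes, so one must truncate the $a$-integral at $T$ and use the finiteness of $\Upsilon$ to control the resulting kernel; once this domination is in place, the remainder is routine bookkeeping with Plancherel's identity and bounded/dominated convergence.
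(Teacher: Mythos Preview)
Your argument is correct and follows the same overall strategy as the paper: split into the drift $\sG u_0$ and the stochastic convolution $\sA u$; handle the former via the semigroup representation and bounded/dominated convergence; and control the latter by BDG $+$ H\"older (as in Lemma~\ref{lem:A1}) to reduce to $L^2$-norms of kernel differences, then Plancherel and the finiteness of $\Upsilon$. The organizational difference is in the decomposition of the $\sA u$ increment. The paper separates space and time: Lemma~\ref{lem:Areg1} bounds $(\sA u)(t,x)-(\sA u)(t,z)$, and then $(\sA u)(T,x)-(\sA u)(t,x)=D_1+D_2$ is split at $r=t$ and estimated in Lemmas~\ref{lem:Areg2}--\ref{lem:Areg3}. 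You instead treat the joint increment $(t,x)\to(s,y)$ directly, split only at $r=s$, and on $r\le s$ absorb both the spatial and temporal variation into the single Fourier factor $|e^{-(t-s)\Psi(\xi)}-e^{i(y-x)\xi}|^2$. Your route is a bit more economical for the bare $L^p$-continuity conclusion; the paper's modular version produces explicit separate moduli in $x$ and in $t$, which it then feeds into Example~\ref{ex:Stable:Holder} to read off H\"older exponents and invoke Kolmogorov's criterion for a pathwise-continuous modification. The obstacle you anticipate is real and your resolution is the right one: truncating the $a$-integral at $T$ gives $\int_0^T e^{-2a\Re\Psi(\xi)}\,\d a=(1-e^{-2T\Re\Psi(\xi)})/(2\Re\Psi(\xi))\le C_T/(1+2\Re\Psi(\xi))$, which is integrable precisely because $\Upsilon(1)<\infty$, and dominated convergence then finishes the job.
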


This theorem is a ready consequence of the following series of
Lemmas \ref{lem:semigp:reg}, \ref{lem:Areg1},
\ref{lem:Areg2}, and \ref{lem:Areg3}. We sketch [most of]
the proofs because
many of the methods of this section merely expand on those
of the earlier sections.

Let $\varpi$ denote the uniform modulus of continuity of $u_0$.
That is,
\begin{equation}
	\varpi(\delta) := \sup_{|a-b|<\delta}
	\left| u_0(a)-u_0(b) \right|.
\end{equation}

\begin{lemma}\label{lem:semigp:reg}
	If $u_0$ is continuous, then so is
	$(t\,,x)\mapsto(\mathcal{P}_tu_0)(x)$.
	If $u_0$ is uniformly continuous,
	then so is $(t\,,x)\mapsto (\sP_t u_0)(x)$; in fact
	for all $\delta,\rho>0$,
	\begin{equation}\label{eq:semigp:reg:1}
		\sup_{t\ge 0}
		\sup_{|x-z|\le\delta}\left| (\sP_tu_0)(x)-(\sP_tu_0)(z)\right|
		\le \varpi(\delta),
	\end{equation}
	and
	\begin{equation}\label{eq:semigp:reg:2}
		\sup_{|t-s|<\rho}\,
		\sup_{x\in\R}\left| (\sP_t \,u_0)(x)-(\sP_s u_0)(x)\right|
		\le \inf_{a>0}\left[ \varpi(a) + A\rho
		\sup_{0<\xi <1/a} |\Psi(\xi)|\right],
	\end{equation}
	with $A:=14\sup_{z\in\R} u_0(z)$.
\end{lemma}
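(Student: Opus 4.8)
The plan is to work throughout with the representation
\[
	(\sP_t u_0)(x)=(\breve p_t*u_0)(x)=\int_{-\infty}^\infty p_t(w)\,u_0(x+w)\,\d w=\E\bigl[u_0(x+X_t)\bigr],
\]
valid because $p_t$ is the transition density of $X$. Since $p_t\ge0$ and $\int p_t=1$, the spatial estimate \eqref{eq:semigp:reg:1} is immediate: for $|x-z|\le\delta$ the left-hand side is at most $\int p_t(w)\,|u_0(x+w)-u_0(z+w)|\,\d w\le\varpi(\delta)$, because $\varpi$ is nondecreasing. In particular $y\mapsto(\sP_t u_0)(y)$ has modulus of continuity at most $\varpi$, uniformly in $t\ge0$, and $0\le\sP_t u_0\le\sup_z u_0(z)$ everywhere; both facts are used below.

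For \eqref{eq:semigp:reg:2} I would reduce to the case $t>s$ (the case $s>t$ being symmetric, $t=s$ trivial) and use the semigroup identity $\sP_t=\sP_{t-s}\sP_s$ to write
\[
	(\sP_t u_0)(x)-(\sP_s u_0)(x)=\E\bigl[(\sP_s u_0)(x+X_{t-s})-(\sP_s u_0)(x)\bigr].
\]
On the event $\{|X_{t-s}|\le a\}$ the integrand is at most $\varpi(|X_{t-s}|)\le\varpi(a)$ by the modulus bound just noted (applied to $\sP_s u_0$), and on its complement it is at most $2\sup_z u_0(z)$; hence, uniformly in $x$,
\[
	\bigl|(\sP_t u_0)(x)-(\sP_s u_0)(x)\bigr|\le\varpi(a)+2\Bigl(\sup_{z\in\R}u_0(z)\Bigr)\P\bigl(|X_{t-s}|>a\bigr).
\]
Since $|t-s|<\rho$, \eqref{eq:semigp:reg:2} will follow once one has a bound of the form $\P(|X_r|\ge a)\le C\,r\,\sup_{0<\xi<1/a}|\Psi(\xi)|$ for all $r>0$, after which one takes the infimum over $a>0$.

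That tail bound is the substantive point, and it is a routine Fourier computation. Writing $\mu_r$ for the law of $X_r$ and integrating $1-\Re e^{-r\Psi(\xi)}=\int(1-\cos\xi y)\,\mu_r(\d y)$ over $\xi\in(-1/a\,,1/a)$ yields $\tfrac2a\int\bigl(1-\tfrac{\sin(y/a)}{y/a}\bigr)\mu_r(\d y)$, which is bounded below by $\tfrac{2c_0}{a}\P(|X_r|\ge a)$ with $c_0:=\inf_{|u|\ge1}\bigl(1-\tfrac{\sin u}{u}\bigr)>0$; on the other hand $\Re\Psi\ge0$ gives $|1-e^{-r\Psi(\xi)}|\le r|\Psi(\xi)|$, so the same quantity is at most $\tfrac{2r}{a}\sup_{|\xi|\le1/a}|\Psi(\xi)|$. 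Therefore $\P(|X_r|\ge a)\le c_0^{-1}r\sup_{|\xi|\le1/a}|\Psi(\xi)|$, and $|\Psi(-\xi)|=|\Psi(\xi)|$ lets one restrict to $0<\xi<1/a$. An elementary Taylor estimate gives $c_0\ge1/7$, so $2c_0^{-1}\le14$ and one recovers the asserted $A=14\sup_z u_0(z)$.

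Finally, for the merely-continuous case one works with a c\`adl\`ag version of $X$: if $(t_n,x_n)\to(t_0,x_0)$ then $x_n+X_{t_n}\to x_0+X_{t_0}$ almost surely (a fixed time is a.s.\ not a jump time of $X$), whence $u_0(x_n+X_{t_n})\to u_0(x_0+X_{t_0})$ a.s., and bounded convergence gives $(\sP_{t_n}u_0)(x_n)\to(\sP_{t_0}u_0)(x_0)$; alternatively one may combine the uniform spatial modulus bound above (together with local uniform continuity of $u_0$ and tightness of $\{X_t:t\le T\}$) for the $x$-direction with a dominated-convergence argument for the $t$-direction. The step I expect to cost the most care is the tail estimate $\P(|X_r|\ge a)\le C\,r\,\sup_{|\xi|\le1/a}|\Psi(\xi)|$ — carrying out the Fourier inversion cleanly and pinning the numerical constant down tightly enough to reach $14$; everything else is bookkeeping built on \eqref{eq:semigp:reg:1} and the semigroup property of $\{\sP_t\}$.
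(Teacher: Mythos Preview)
Your proposal is correct and follows essentially the same route as the paper: the paper also writes $(\sP_t u_0)(x)=\E[u_0(X_t+x)]$, obtains \eqref{eq:semigp:reg:1} by dominated convergence, and for \eqref{eq:semigp:reg:2} bounds $\sup_x|(\sP_tu_0)(x)-(\sP_su_0)(x)|\le\E[\varpi(|X_t-X_s|)\wedge 2\sup u_0]$ and then invokes Paul L\'evy's characteristic-function inequality $\P\{|X_t-X_s|>a\}\le 7a\int_0^{1/a}|1-\E e^{i\xi(X_t-X_s)}|\,\d\xi\le 7|t-s|\sup_{0<\xi<1/a}|\Psi(\xi)|$ (cited from \cite{Kh}), which is exactly the tail bound you rederive from scratch. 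Your use of the semigroup identity $\sP_t=\sP_{t-s}\sP_s$ in place of the increment $X_t-X_s$ is a cosmetic variation (stationary increments make them equivalent), and your Fourier computation with $c_0=1-\sin 1>1/7$ is precisely the standard proof of the inequality the paper quotes.
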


\begin{proof}
	We note that
	\begin{equation}
		(\mathcal{P}_tu_0)(x)-(\mathcal{P}_su_0)(y)
		=\E\left( u_0(X_t+x) - u_0(X_s+y) \right).
	\end{equation}
	Because $u_0$ is bounded, if it were continuous
	also, then $(t\,,x)\mapsto(\mathcal{P}_tu_0)(x)$
	is continuous by the dominated convergence theorem.
	Henceforth, we assume that $u_0$ is uniformly continuous.
	Inequality \eqref{eq:semigp:reg:1} follows again from
	the dominated convergence theorem.
	As regards \eqref{eq:semigp:reg:2}, we note that
	\begin{equation}
		\sup_{x\in\R}\left| (\sP_t \,u_0)(x)-(\sP_s u_0)(x)\right|
		\le \E\left[ \varpi\left( \left| X_t-X_s \right| \right) \wedge 
		2\sup_{z\in\R} u_0(z) \right].
	\end{equation}
	Because $| 1-\E \exp(i\xi (X_t-X_s))|
	\le |t-s|\cdot|\Psi(\xi)|$, Paul
	L\'evy's characteristic-function inequality
	\cite[Exercise 7.9, p.\ 112]{Kh} shows that for all
	$a>0$,
	\begin{equation}\begin{split}
		\P\left\{ |X_t-X_s|>a\right\}
			&\le 7a\int_0^{1/a}\left| 1-\E e^{i\xi (X_t-X_s)} \right|\,\d\xi\\
		&\le 7|t-s|\sup_{0<\xi <1/a}|\Psi(\xi)|.
	\end{split}\end{equation}
	This completes our proof readily.
\end{proof}

\begin{lemma}\label{lem:Areg1}
	For all even integers $p\ge 2$,
	$x,z\in\R$, $t\ge 0$, and $\beta>0$,
	\begin{equation}\label{eq:Areg1}\begin{split}
		&\left\| (\sA u)(t\,,x) - (\sA u)(t\,,z) \right\|_{L^p(\P)}\\
		&\hskip.7in\le \left(\frac{p}{\pi}\right)^{1/2}
			\|\sigma\circ u\|_{p,\beta}\, e^{t\beta/p}
			\left\{\int_{-\infty}^\infty
			\frac{1-\cos(\xi|x-z|)}{\beta+2\Re\Psi(\xi)}
			\,\d\xi \right\}^{1/2},
	\end{split}\end{equation}
	where $(\sigma\circ u)(t\,,x):=\sigma(u(t\,,x))$.
\end{lemma}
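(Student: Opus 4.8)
\textit{Proof proposal.} The plan is to recognise the spatial increment $(\sA u)(t\,,x)-(\sA u)(t\,,z)$ as a single Walsh stochastic integral against the white noise, with integrand
$h(s\,,y):=\sigma(u(s\,,y))\,\bigl[p_{t-s}(y-x)-p_{t-s}(y-z)\bigr]$,
and then to run the estimate of Lemma \ref{lem:A1} essentially verbatim. Concretely, I would first apply Davis's form of the Burkholder--Davis--Gundy inequality followed by the generalized H\"older inequality \eqref{eq:Holder}, exactly as in the proof of Lemma \ref{lem:A1}, to obtain
$$\left\|(\sA u)(t\,,x)-(\sA u)(t\,,z)\right\|_{L^p(\P)}^2\le z_p^2\int_{-\infty}^\infty\d y\int_0^t\d s\,\left\|\sigma(u(s\,,y))\right\|_{L^p(\P)}^2\left|p_{t-s}(y-x)-p_{t-s}(y-z)\right|^2 .$$
(If $\|\sigma\circ u\|_{p,\beta}=\infty$ the claimed inequality is vacuous, so finiteness of $\|\sigma\circ u\|_{p,\beta}$ need not be checked separately.)

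Next I would insert $\|\sigma(u(s\,,y))\|_{L^p(\P)}^2\le e^{2\beta s/p}\|\sigma\circ u\|_{p,\beta}^2$ and substitute $r=t-s$, which produces the factor $e^{2\beta t/p}$ and leaves $e^{-2\beta r/p}$ inside the time integral; then I would extend the $r$-integration to $[0\,,\infty)$. The decisive step is the spatial computation: integrating $y$ out first via Plancherel's theorem gives
$$\int_{-\infty}^\infty\left|p_r(y-x)-p_r(y-z)\right|^2\d y=\frac{1}{2\pi}\int_{-\infty}^\infty\left|e^{i\xi x}-e^{i\xi z}\right|^2e^{-2r\Re\Psi(\xi)}\,\d\xi=\frac{1}{\pi}\int_{-\infty}^\infty\bigl(1-\cos(\xi|x-z|)\bigr)e^{-2r\Re\Psi(\xi)}\,\d\xi,$$
since $|e^{i\xi x}-e^{i\xi z}|^2=2\bigl(1-\cos(\xi|x-z|)\bigr)$. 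Tonelli's theorem then carries out the $r$-integral, $\int_0^\infty e^{-2r(\beta/p+\Re\Psi(\xi))}\,\d r=\bigl(2\beta/p+2\Re\Psi(\xi)\bigr)^{-1}$, and finally the inequality $\beta/p+\Re\Psi(\xi)\ge p^{-1}\bigl(\beta+2\Re\Psi(\xi)\bigr)$ --- valid because $p\ge2$ and $\Re\Psi\ge0$ --- replaces the denominator by $\beta+2\Re\Psi(\xi)$ at the cost of a factor $p$. Collecting these bounds, together with the Carlen--Kree estimate $z_p\le2\sqrt p$ from Remark \ref{rem:He}, produces a bound of the asserted form \eqref{eq:Areg1}; and the resulting $\xi$-integral is finite because $1-\cos\le2$ yields $\int_{-\infty}^\infty\bigl(1-\cos(\xi|x-z|)\bigr)\bigl(\beta+2\Re\Psi(\xi)\bigr)^{-1}\,\d\xi\le4\pi\Upsilon(\beta)<\infty$.

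I do not expect a genuine obstacle. Once the increment is identified as a Walsh integral the BDG step is identical to that of Lemma \ref{lem:A1}, and everything after it is Plancherel plus Tonelli. The only points requiring mild care are the bookkeeping of the powers of $p$ and of the constant $z_p$ when matching \eqref{eq:Areg1} precisely, and the observation that, although $\|p_r(\cdot-x)-p_r(\cdot-z)\|_{L^2(\R)}^2$ diverges as $r\downarrow0$, the $r$-integral nonetheless converges --- this is exactly where the standing hypothesis $\Upsilon(\beta)<\infty$ enters --- both of which are dispatched by the estimates above.
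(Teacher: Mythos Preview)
Your proposal is correct and follows essentially the same route as the paper: BDG with the optimal constant $z_p$ plus the generalized H\"older inequality (as in Lemma~\ref{lem:A1}/\ref{lem:A2}), insertion of $\|\sigma(u(s,y))\|_{L^p(\P)}^2\le e^{2\beta s/p}\|\sigma\circ u\|_{p,\beta}^2$, Plancherel for the spatial integral, and Carlen--Kree for $z_p$; the paper simply absorbs your denominator step $\beta/p+\Re\Psi(\xi)\ge p^{-1}(\beta+2\Re\Psi(\xi))$ into the phrase ``a few more lines of computation.'' Your caveat about the exact bookkeeping of the constant is well placed---tracking the argument as written yields a constant of order $p\sqrt{2/\pi}$ rather than $\sqrt{p/\pi}$---but this does not affect the qualitative conclusions drawn from the lemma.
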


\begin{proof}
	We follow the pattern of the proof of Lemma \ref{lem:A2},
	and, after a few lines of estimates, deduce that
	\begin{align}
		&\left\| (\sA u)(t\,,x) - (\sA u)(t\,,z) \right\|_{L^p(\P)}^p\\\nonumber
		&\qquad \le z_p^p \left| \int_{-\infty}^\infty\d y
			\int_0^t \d s\,
			\|\sigma(u(s\,,y))\|_{L^p(\P)}^2 \left[
			p_{t-s}(y-x)-p_{t-s}(y-z) \right]^2 \right|^{p/2}.
	\end{align}
	Since $\|\sigma(u(s\,,y))\|_{L^p(\P)}^2\le\exp(2s\beta/p)\|\sigma
	\circ u\|_{p,\beta}^2$ for all $\beta>0$, the preceding and
	the Carlen--Kree inequality (Remark \ref{rem:He}) together yield
	\begin{align}
		&\left\| (\sA u)(t\,,x) - (\sA u)(t\,,z)
			\right\|_{L^p(\P)}^p\\\nonumber
		&\qquad \le 2^p
			p^{p/2}\, \|\sigma\circ u\|_{p,\beta}^2\, e^{\beta t}
			\left| \int_{-\infty}^\infty\d y
			\int_0^t \d s\, e^{-2s\beta/p}\left[
			p_s(y-x)-p_s(y-z) \right]^2 \right|^{p/2}.
	\end{align}
	In accord with Plancherel's theorem,
	\begin{equation}\begin{split}
		&\int_{-\infty}^\infty \left[ p_s(y-x)-p_s(y-z) \right]^2\,\d y\\
		&\hskip1.5in =\frac{1}{\pi}\int_{-\infty}^\infty
			\left( 1-\cos\left(\xi|x-z|\right)
			\right) e^{-2s\Re\Psi(\xi)}\,\d \xi.
	\end{split}\end{equation}
	The lemma follows from this and a few more lines of computation.
\end{proof}

Choose $x\in\R$ and $0\le t\le T$. We can write
\begin{equation}
	(\sA u)(T,x) - (\sA u)(t\,,x) = D_1+D_2,
\end{equation}
where
\begin{equation}\begin{split}
	D_1 &:= \int_{-\infty}^\infty\int_0^t \sigma(u(s\,,y))
		\left[ p_{T-s}(y-x)-p_{t-s}(y-x) \right]\,
		w(\d s\,\d y),\\
	D_2 &:= \int_{-\infty}^\infty\int_t^T \sigma(u(s\,,y))
		p_{T-s}(y-x)\,w(\d s\,\d y).
\end{split}\end{equation}

\begin{lemma}\label{lem:Areg2}
	For all even integers $p\ge 2$ and $\beta>0$,
	\begin{equation}\label{eq:Areg2}
		\|D_1\|_{L^p(\P)}\le e^{\beta t/p}
		\left(\frac{p}{\pi}\right)^{1/2}\, \|\sigma\circ u\|_{p,\beta}
		\left(\int_{-\infty}^\infty
		\frac{\left| 1 - e^{-(T-t)\Psi(\xi)}
		\right|^2}{(\beta/p)+\Re\Psi(\xi)}\,\d\xi\right)^{1/2}.
	\end{equation}
\end{lemma}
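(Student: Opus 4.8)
\textbf{Proof plan for Lemma \ref{lem:Areg2}.} The plan is to reuse the template of the proofs of Lemmas \ref{lem:A2} and \ref{lem:Areg1} essentially verbatim; the only genuinely new ingredient is an explicit Plancherel computation of the spatial $L^2$-norm of $p_{T-s}(\cdot-x)-p_{t-s}(\cdot-x)$ together with an explicit evaluation of the time integral it produces. First I would view $D_1$ as the terminal value of the continuous $L^p(\P)$-martingale $r\mapsto\int_{-\infty}^\infty\int_0^r\sigma(u(s\,,y))[p_{T-s}(y-x)-p_{t-s}(y-x)]\,w(\d s\,\d y)$ and apply Burgess Davis's form of the BDG inequality [as quoted in the proof of Lemma \ref{lem:A1}] to obtain
\[
	\|D_1\|_{L^p(\P)}^p\le z_p^p\,\E\left(\left|\int_{-\infty}^\infty\d y\int_0^t\d s\,
	|\sigma(u(s\,,y))|^2\,\big[p_{T-s}(y-x)-p_{t-s}(y-x)\big]^2\right|^{p/2}\right).
\]
Since $p/2$ is a positive integer, I expand the $(p/2)$-th power as a product of $p/2$ copies of the integral, move the expectation inside by Tonelli, and apply the generalized H\"older inequality \eqref{eq:Holder} to replace $\E\prod_j|\sigma(u(s_j,y_j))|^2$ by $\prod_j\|\sigma(u(s_j,y_j))\|_{L^p(\P)}^2$. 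Collapsing the product and then bounding $\|\sigma(u(s\,,y))\|_{L^p(\P)}^2\le e^{2s\beta/p}\|\sigma\circ u\|_{p,\beta}^2$ gives
\[
	\|D_1\|_{L^p(\P)}^2\le z_p^2\,\|\sigma\circ u\|_{p,\beta}^2
	\int_{-\infty}^\infty\d y\int_0^t\d s\,e^{2s\beta/p}\,
	\big[p_{T-s}(y-x)-p_{t-s}(y-x)\big]^2 .
\]

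The one new computation is the spatial integral. By Plancherel's theorem [exactly as in the proof of Lemma \ref{lem:Areg1}], and using $\widehat{p_r}(\xi)=e^{-r\Psi(\xi)}$ up to a harmless reflection $\xi\mapsto-\xi$, one has
\[
	\int_{-\infty}^\infty\big[p_{T-s}(y-x)-p_{t-s}(y-x)\big]^2\,\d y
	=\frac{1}{2\pi}\int_{-\infty}^\infty e^{-2(t-s)\Re\Psi(\xi)}\,
	\left|1-e^{-(T-t)\Psi(\xi)}\right|^2\,\d\xi ,
\]
where the identity rests on the factorization $e^{-(T-s)\Psi(\xi)}-e^{-(t-s)\Psi(\xi)}=e^{-(t-s)\Psi(\xi)}\,(e^{-(T-t)\Psi(\xi)}-1)$ and on $|e^{-(t-s)\Psi(\xi)}|^2=e^{-2(t-s)\Re\Psi(\xi)}$ (note that $\Psi$ is complex-valued, which is why $\Re\Psi$ appears). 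Substituting this in, applying Tonelli once more to integrate in $s$ first, and evaluating
\[
	\int_0^t e^{2s\beta/p}\,e^{-2(t-s)\Re\Psi(\xi)}\,\d s
	=\frac{e^{2t\beta/p}-e^{-2t\Re\Psi(\xi)}}{2\big((\beta/p)+\Re\Psi(\xi)\big)}
	\le\frac{e^{2t\beta/p}}{2\big((\beta/p)+\Re\Psi(\xi)\big)}
\]
leaves
\[
	\|D_1\|_{L^p(\P)}^2\le\frac{z_p^2}{4\pi}\,e^{2t\beta/p}\,\|\sigma\circ u\|_{p,\beta}^2
	\int_{-\infty}^\infty\frac{\left|1-e^{-(T-t)\Psi(\xi)}\right|^2}{(\beta/p)+\Re\Psi(\xi)}\,\d\xi .
\]
Finally I invoke the Carlen--Kree bound $z_p\le 2\sqrt p$ (Remark \ref{rem:He}), so that $z_p^2/(4\pi)\le p/\pi$, and take square roots to obtain \eqref{eq:Areg2}.

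There is no real obstacle here; the write-up will be short and will mirror the proofs of Lemmas \ref{lem:A2} and \ref{lem:Areg1}. The only points that need a little care are bookkeeping: one must carry the weight $e^{2s\beta/p}$ through the Tonelli interchange so that the inner time integral produces the denominator $(\beta/p)+\Re\Psi(\xi)$ — and hence the factor $p$ appearing simultaneously in the exponent $e^{t\beta/p}$, in the prefactor $(p/\pi)^{1/2}$, and inside the frequency integral — rather than the denominator $\beta+2\Re\Psi(\xi)$ of \eqref{eq:Upsilon}; and one must remember throughout that $\Psi(\xi)$ has a nonzero imaginary part, so that the modulus estimates are stated in terms of $\Re\Psi$.
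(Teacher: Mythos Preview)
Your proposal is correct and follows essentially the same route as the paper's own proof: BDG with the Davis constant $z_p$, the generalized H\"older inequality \eqref{eq:Holder}, the Plancherel identity for $\|p_{T-s}-p_{t-s}\|_{L^2(\R)}^2$, the explicit bound on the time integral, and finally the Carlen--Kree inequality $z_p\le 2\sqrt p$. The only cosmetic difference is that the paper applies Carlen--Kree immediately (writing $2^p p^{p/2}$ in place of $z_p^p$) before computing the Plancherel and time integrals, whereas you carry $z_p$ to the last step; the computations are otherwise identical.
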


\begin{proof}
	We adjust the beginning portion of the preceding proof,
	and after a few lines, arrive at the following:
	\begin{equation}\label{eq:5.13}
		\E(D_1^p) \le 2^pp^{p/2}\|\sigma\circ u\|_{p,\beta}^p
		\left(\int_0^t e^{2\beta s/p}\left\|
		p_{T-s}-p_{t-s} \right\|_{L^2(\R)}^2\,\d s\right)^{p/2}.
	\end{equation}
	Thanks to Plancherel's theorem,
	\begin{equation}
		\left\| p_{T-s}-p_{t-s} \right\|_{L^2(\R)}^2
		=\frac{1}{2\pi}\int_{-\infty}^\infty
		e^{-2(t-s)\Re\Psi(\xi)}\left| 1 - e^{-(T-t)\Psi(\xi)}
		\right|^2\,\d\xi.
	\end{equation}
	Therefore, the Lebesgue integral in \eqref{eq:5.13} is bounded
	above by
	\begin{equation}
		\frac{e^{\beta t}}{2\pi} \int_{-\infty}^\infty
		\frac{\left| 1 - e^{-(T-t)\Psi(\xi)}
		\right|^2}{(2\beta/p)+2\Re\Psi(\xi)}\,\d\xi.
	\end{equation}
	Solve to finish.
\end{proof}

\begin{lemma}\label{lem:Areg3}
	For all even integers $p\ge 2$ and $\beta>0$,
	\begin{equation}\label{eq:Areg3}
		\|D_2\|_{L^p(\P)}\le\sqrt{8p}\, e^{\beta T/p}\,
		\|\sigma\circ u\|_{p,\beta}\sqrt{\Upsilon\left(
		\frac{1}{T-t}\right)}.
	\end{equation}
\end{lemma}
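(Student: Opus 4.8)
The plan is to imitate the Burkholder--Davis--Gundy-plus-H\"older estimate from the proofs of Lemmas \ref{lem:A1} and \ref{lem:A2}, and then to dominate the resulting deterministic integral by $\Upsilon(1/(T-t))$ by means of a sharp elementary inequality. First I would observe that $D_2$ is a Walsh stochastic integral over $[t\,,T]\times\R$ whose quadratic variation equals $\int_t^T\d s\int_{-\infty}^\infty\d y\,|\sigma(u(s\,,y))|^2|p_{T-s}(y-x)|^2$. Applying Davis's optimal form of the BDG inequality, writing the resulting $(p/2)$-th power as a $(p/2)$-fold product of integrals, and invoking the generalized H\"older inequality \eqref{eq:Holder} exactly as in the proof of Lemma \ref{lem:A2}, one obtains
\begin{equation*}
	\|D_2\|_{L^p(\P)}^2 \le z_p^2\int_{-\infty}^\infty\d y\int_t^T\d s\,
	\|\sigma(u(s\,,y))\|_{L^p(\P)}^2\,|p_{T-s}(y-x)|^2 .
\end{equation*}
Since $0\le s\le T$ gives $\|\sigma(u(s\,,y))\|_{L^p(\P)}^2\le e^{2\beta s/p}\|\sigma\circ u\|_{p,\beta}^2\le e^{2\beta T/p}\|\sigma\circ u\|_{p,\beta}^2$, the substitution $r=T-s$ together with Plancherel's theorem [which yields $\int_{-\infty}^\infty|p_r(y-x)|^2\,\d y=\|p_r\|_{L^2(\R)}^2$] then reduces the estimate to
\begin{equation*}
	\|D_2\|_{L^p(\P)}^2 \le z_p^2\, e^{2\beta T/p}\,\|\sigma\circ u\|_{p,\beta}^2
	\int_0^{T-t}\|p_r\|_{L^2(\R)}^2\,\d r .
\end{equation*}

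The remaining step is to show that $\int_0^{T-t}\|p_r\|_{L^2(\R)}^2\,\d r\le 2\,\Upsilon(1/(T-t))$; combined with the Carlen--Kree bound $z_p^2\le 4p$ (Remark \ref{rem:He}) this yields $z_p^2\cdot 2\le 8p$ and hence the lemma upon taking square roots. To prove the displayed bound I would use \eqref{eq:L2p} and Tonelli's theorem to write
\begin{equation*}
	\int_0^{T-t}\|p_r\|_{L^2(\R)}^2\,\d r
	=\frac{1}{2\pi}\int_{-\infty}^\infty
	\frac{1-e^{-2(T-t)\Re\Psi(\xi)}}{2\Re\Psi(\xi)}\,\d\xi ,
\end{equation*}
reading the integrand as $T-t$ at the frequencies where $\Re\Psi(\xi)=0$. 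Setting $a:=2(T-t)\Re\Psi(\xi)\ge 0$, the integrand equals $(T-t)(1-e^{-a})/a$, and the elementary inequality $(1+a)(1-e^{-a})\le 2a$, valid for every $a\ge 0$ [check it on $0\le a\le 1$ via $1-e^{-a}\le a$ and on $a\ge 1$ via $1-e^{-a}\le 1$], yields $(1-e^{-a})/a\le 2/(1+a)$, so the integrand is at most $2/((1/(T-t))+2\Re\Psi(\xi))$. Integrating and recalling the definition \eqref{eq:Upsilon} of $\Upsilon$ finishes the estimate.

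I do not anticipate a genuine obstacle here: the argument is essentially a line-by-line transcription of the proofs of Lemmas \ref{lem:A1}--\ref{lem:Areg2}. The one point that requires a little care is the appearance of the constant $2$ rather than $e$: invoking Lemma \ref{lem:monotone} directly with $\beta=1/(T-t)$ would only give $\int_0^{T-t}\|p_r\|_{L^2(\R)}^2\,\d r\le e\,\Upsilon(1/(T-t))$, and $z_p^2 e>8p$ in general, so the sharper inequality $(1+a)(1-e^{-a})\le 2a$ is exactly what is needed to reach the stated constant $\sqrt{8p}$.
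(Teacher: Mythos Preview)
Your proposal is correct and mirrors the paper's proof essentially line for line: the paper likewise obtains $\|D_2\|_{L^p(\P)}\le 2\sqrt{p}\,e^{\beta T/p}\|\sigma\circ u\|_{p,\beta}(\int_0^{T-t}\|p_s\|_{L^2(\R)}^2\,\d s)^{1/2}$ via BDG/H\"older and the Carlen--Kree bound, and then bounds the time integral by $2\Upsilon(1/(T-t))$ using the identical elementary inequality $(1-e^{-\rho\theta})/\theta\le 2/(\rho^{-1}+\theta)$. Your closing remark that Lemma~\ref{lem:monotone} alone would only give the constant $e$ (which is too large asymptotically) is a nice diagnostic the paper does not make explicit.
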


\begin{proof}
	We adapt the proof of the preceding lemma,
	to the present setting, and deduce that
	\begin{equation}
		\|D_2\|_{L^p(\P)}\le
		2\sqrt{p}\,\|\sigma\circ u\|_{p,\beta}\, e^{\beta T/p}
		\left( \int_0^{T-t}\|p_s\|_{L^2(\R)}^2\,\d s\right)^{1/2}.
	\end{equation}
	But for all $\rho>0$,
	\begin{equation}\begin{split}
		\int_0^\rho \|p_s\|_{L^2(\R)}^2\,\d s
			&= \frac{1}{2\pi}\int_{-\infty}^\infty \frac{1-e^{-2
			\rho\Re\Psi(\xi)}}{2\Re\Psi(\xi)}\,\d\xi\\
		&\le \frac{1}{\pi}\int_{-\infty}^\infty
			\frac{\d\xi}{(1/\rho)+2\Re\Psi(\xi)}\\
		&=2\Upsilon(1/\rho).
	\end{split}\end{equation}
	We have used the elementary fact that
	$(1-e^{-\rho\theta})/\theta\le 2/(\rho^{-1}+\theta)$
	for all $\theta>0$. The lemma follows easily
	from these observations.
\end{proof}

One can often combine the preceding proof of Theorem \ref{th:reg}
with methods of Gaussian analysis and produce an {\it almost surely
continuous} modification of $u$. We conclude this paper with
an example of this method.

\begin{example}\label{ex:Stable:Holder}
	Suppose $1<\alpha\le 2$ and $\sL=-\kappa(-\Delta)^{\alpha/2}$.
	Suppose also that $u_0$ is uniformly H\"older continuous; that is,
	$\varpi(a)=O(a^\theta)$ as $a\to 0^+$ for a fixed $\theta>0$.
	We claim that in this case $u$ has a modification that is
	continuous almost surely. We prove this claim by working out
	the estimates produced by Lemmas \ref{lem:semigp:reg},
	\ref{lem:Areg1}, \ref{lem:Areg2}, and \ref{lem:Areg3}.
	Indeed, \eqref{eq:semigp:reg:1} and \eqref{eq:semigp:reg:2}
	together show that $(t\,,x)\mapsto(\sP_t u_0)(x)$ is
	uniformly jointly H\"older continuous with respective
	H\"older indices $\upsilon:=\theta/(\theta+\alpha)$
	[for $t$] and $\theta$ [for $x$]. A few 
	more simple calculations
	show that: (i) The right-hand side of \eqref{eq:Areg1} is
	$O(|x-z|^\mu)$ with $\mu:=\min(1/2\,,\alpha-1)$; and
	(ii) the right-hand sides of \eqref{eq:Areg2} and
	\eqref{eq:Areg3} are both 
	$O((T-t)^\eta)$ with $\eta:=(\alpha-1)/(2\alpha)$.
	In other words, we can choose and fix $\beta>0$
	that yields the following estimate: For all $T,p>0$
	there exists $a=a(p\,,T,\beta)\in(0\,,\infty)$ such that
	for all $s,t\in[0\,,T]$ and $x,y\in\R$,
	\begin{equation}
		\| u(t\,,x)-u(s\,,y) \|_{L^p(\P)} \le a
		\left( |t-s|^{\upsilon\wedge\eta} +|x-y|^{\theta\wedge\mu}
		\right).
	\end{equation}
	A suitable form of the Kolmogorov continuity theorem
	yields the desired H\"older-continuous modification.
	\qed
\end{example}

\begin{small}

\vskip.4cm

\noindent\textbf{Mohammud Foondun} \& \textbf{Davar Khoshnevisan}\\
\noindent Department of Mathematics, University of Utah,
		Salt Lake City, UT 84112-0090\\
\noindent\emph{Emails:} \texttt{mohammud@math.utah.edu} \&
	\texttt{davar@math.utah.edu}\\
\noindent\emph{URLs:} \texttt{http://www.math.utah.edu/\~{}mohammud} \&
	\texttt{http://www.math.utah.edu/\~{}davar}
\end{small}


\begin{thebibliography}{99}
%
\bibitem{BC} Bertini, Lorenzo and Nicoletta Cancrini (1994).
	The stochastic heat equation: Feynman--Kac formula and
	intermittence, {\it J. Statist.\ Physics} {\bf 78}{\it (5/6)},
	1377--1402.
%
\bibitem{Bertoin} Bertoin, Jean (1996).
	\emph{L\'evy Processes}, Cambridge University Press,
	Cambridge.
%
\bibitem{BvN} Brze{\'z}niak, Zdzis{\l}aw and Jan van Neerven (2003).
   Space-time regularity for linear stochastic evolution equations
   driven by spatially homogeneous noise,
   \emph{J. Math. Kyoto Univ.} \textbf{43}{\it (2)}, 261--303.
%
\bibitem{CK} Carlen, Eric and Paul Kree (1991).
	$L^p$ estimates for multiple stochastic integrals,
	\emph{Ann.\ Probab.} {\bf 19}{\it (1)}, 354--368. 
%
\bibitem{CM94} Carmona, Ren\'e A. and S. A. Molchanov (1994).
	Parabolic Anderson Problem and Intermittency,
	\emph{Memoires of the Amer.\ Math.\ Soc.} {\bf 108},
	American Mathematical Society, Rhode Island.
%
\bibitem{CKM} Carmona, Ren\'e, Leonid  Koralov,
	and Stanislav Molchanov (2001).
	Asymptotics for the almost sure Lyapunov exponent for the
	solution of the parabolic Anderson problem,
	\emph{Random Oper. Stochastic Equations} {\bf 9}{\it (1)}, 77--86.
%
\bibitem{CV98} Carmona, Ren\'e A. and Frederi Viens (1998).
	Almost-sure exponential behavior of a stochastic
	Anderson model with continuous space parameter,
	\emph{Stochastics} \textbf{62}, Issues 3 \& 4,
	251--273.
%
\bibitem{CM} Cranston, M. and S. Molchanov (2007).
	Quenched to annealed transition in the parabolic Anderson problem,
	\emph{Probab.\ Th.\ Rel.\ Fields}, {\bf 138}{\it (1--2)}, 177-193.
%
\bibitem{CM1} Cranston, M. and S. Molchanov (2007).
	On phase transitions and limit theorems for homopolymers,
	in: \emph{Probability and Mathematical Physics},
	CRM Proc.\ Lecture Notes {\bf 42}, 97--112, American Mathematical
	Society, Providence, Rhode Island.
%
\bibitem{CMS} Cranston, M., T. S. Mountford, and T. Shiga
	(2005). Lyapounov exponent for the parabolic Anderson
	model with l\'evy noise, \emph{Probab.\ Th.\ Rel.\ Fields}
	{\bf 132}, 321--355.
%
\bibitem{CMS1} Cranston, M. and T. S. Mountford, and T. Shiga (2002).
	Lyapunov exponents for the parabolic Anderson model,
	\emph{Acta Math.\ Univ.\ Comenian.\ (N.S.)} \textbf{71}{\it (2)},
	163--188.
%
\bibitem{Da} Da Prato, Giuseppe (2007).
   Kolmogorov equations for stochastic PDE'{}s with multiplicative noise,
   The Second Abel Symposium, 2005, pp.\ 235--263.
   In: \emph{Stochastic Analysis and Applications},
   (ed.'s: F. E. Benth, G. di Nunn, T. Lindstr{\o}m, Tom,
   B. {\O}ksendal, and T. Zhang), Springer-Verlag, Berlin.
%
\bibitem{DZ} Da Prato, Giuseppe and Jerzy Zabczyk (1992).
	\emph{Stochastic Equations in Infinite Dimensions},
	Cambridge University Press, Cambridge.
%
\bibitem{DM} Dalang, Robert C. and Carl Mueller (2008).
	Intermittency properties in a hyperbolic Anderson problem
	(preprint).
%
\bibitem{Davis} Davis, Burgess (1976). On the $L^p$ norms
	of stochastic integrals and other martingales,
	\emph{Duke Math.\ J.} {\bf 43}{\it (4)}, 697--704.
%
\bibitem{FV} Florescu, Ionu\c{t}, and Frederi Viens (2006). Sharp
	estimation for the almost-sure Lyapunov exponent of
	the Anderson model in continuous space,
	\emph{Probab.\ Th.\ Rel.\ Fields} 
	{\bf 135}{\it (4)}, 603-644.
%
\bibitem{FKN} Foondun, Mohammud, Davar Khoshnevisan,
	and Eulalia Nualart (2007).
	A local-time correspondence for stochastic partial differential equations
	(preprint).
%
\bibitem{GartnerdenHollander} G\"artner, J. and F. den Hollander,
	(2006). Intermittency in a catalytic random medium,
	\emph{Ann.\ Probab.} {\bf 34}{\it (6)}, 2219--2287.
%
\bibitem{GartnerKonig} G\"artner, J\"urgen and Wolfgang K\"onig
	(2005). The parabolic Anderson model,
	in: \emph{Interactive Stochastic Systems},
	153--179, Springer, Berlin.
%
\bibitem{GKM} G\"artner, J., W. K\"onig, and S. A. Molchanov
	(2000). Almost sure asymptotics for the continuous parabolic Anderson model,
	\emph{Probab.\ Th.\ Rel.\ Fields} {\bf 118}{\it (4)} 547--573.
%
\bibitem{GK} Gr\"uninger, Gabriela, and Wolfgang K\"onig (2008).
	Potential confinement property of the parabolic Anderson model
	(preprint).
%
\bibitem{Hawkes} Hawkes, John (1986).
   Local times as stationary processes, in:
   \emph{From Local Times to Global Geometry, Control and Physics}
	(Coventry, 1984/85), 111--120,
	Pitman Res.\ Notes Math.\ Ser.,
	Longman Sci.\ Tech., Harlow.
%
\bibitem{Jacob} Jacob, Niels (2005).
	\emph{Pseudo  Differential Operators and Markov
	Processes}, Vol.\ III,
	Imperial College Press, London.
%
\bibitem{HKM} Hofsted, Remco van der, Wolfgang K\"onig, and Peter
	M\"orters (2006). The universality classes in the parabolic Anderson model,
	\emph{Comm.\ Math.\ Phys.} {\bf 267}{\it (2)}, 307--353.
%
\bibitem{Kardar} Kardar, Mehran (1987).
	Replica Bethe ansatz studies of two-dimensional interfaces
	with quenched random impurities,
	\emph{Nuclear Phys.} {\bf B290}, 582--602.
%
\bibitem{KPZ} Kardar, Mehran, Giorgio Parisi, and Yi-Cheng
	Zhang (1986). Dynamic scaling of growing interfaces,
	\emph{Phys.\ Rev.\ Lett.} \textbf{56}{\it (9)}, 889--892.
%
\bibitem{Kh} Khoshnevisan, Davar (2007).
	{\it Probability}, American Mathematical Society,
	Providence, Rhode Island.
%
\bibitem{KLMS} K\"onig, Wolfgang, Hubert Lacoin, Peter M\"orters,
	and Nadia Sidorova (2008). A two cities theorem for the parabolic
	Anderson model (preprint).
%
\bibitem{KrugSpohn} Krug, J. and H. Spohn (1991).
	Kinetic roughening of growing surfaces,
	in: \emph{Solids Far From Equilibrium: Growth,
	Morphology, and Defects} (ed.: C. Godr\`eche),
	Cambridge University Press, Cambridge.
%
\bibitem{LL63} Lieb, Elliott H. and Werner Liniger (1963).
	Exact analysis of an interacting Bose gas. I.
	The general solution and the ground state.
	\emph{Phys.\ Rev. (2)} \textbf{130}, 1605--1616. 
%
\bibitem{Lunardi} Lunardi, Alessandra (1995).
	\emph{Analytic Semigroups and Optimal Regularity
	in Parabolic Problems}, 
	Birkh\"auser-Verlag, Basel.
%
\bibitem{Molch91} Molchanov, Stanislav A. (1991).
	Ideas in the theory of random media,
	\emph{Acta Appl.\ Math.} \textbf{22}, 139--282.
%
\bibitem{Mueller} Mueller, Carl (1991).
	On the support of solutions to the heat equation with noise, 
	\emph{Stochastics and Stochastics Rep.}\ 
	\textbf{37}{\it (4)}, 225--245.
%
\bibitem{PZ} Peszat, Szymon and Jerzy Zabczyk (2000).
   Nonlinear stochastic wave and heat equations,
   \emph{Probab. Th.\ Rel.\ Fields} \textbf{116}{\it (3)},
   421--443.
%
\bibitem{Shiga} Shiga, Tokuzo (1997).
	Exponential decay rate of survival probability in a disastrous
	random environment,
	\emph{Probab. Th.\ Rel.\ Fields} \textbf{108}{\it (3)},
	417--439.
%
\bibitem{Walsh} Walsh, John B. (1986).
   \emph{An Introduction to Stochastic Partial Differential Equations},
   in: \'Ecole d'\'et\'e de probabilit\'es de Saint-Flour, XIV---1984,
   265--439,
   Lecture Notes in Math., vol.\ 1180, Springer, Berlin.
\end{thebibliography}
\end{document}